\definecolor{b}{HTML}{4472c4}
\definecolor{o}{HTML}{ED7D31}
\definecolor{g}{HTML}{70ad47}
\newtheorem{theorem}{Theorem}[section]
\newtheorem{lemma}[theorem]{Lemma}
\newtheorem{corollary}[theorem]{Corollary}
\newtheorem{definition}[theorem]{Definition}
\newtheorem{fact}[theorem]{Fact}
\newtheorem{claim}[theorem]{Claim}
\newtheorem{problem}[theorem]{Problem}
\theoremstyle{remark}
\newtheorem{remark}[theorem]{Remark}
\theoremstyle{plain}
\newcommand{\R}{\mathbb{R}}
\def\dist{{\rm dist}}
\def\dist{{\rm dist}}
\newcommand{\seminorm}[1]{{\left\vert\kern-0.25ex\left\vert\kern-0.25ex\left\vert #1
    \right\vert\kern-0.25ex\right\vert\kern-0.25ex\right\vert}}
\begin{document}

\title[A universal threshold for geometric embeddings of trees]{A universal threshold for geometric embeddings of trees}

\author{Dylan J. Altschuler, Pandelis Dodos, Konstantin Tikhomirov and Konstantinos Tyros}

\address{Department of Mathematical Sciences, Carnegie Mellon University}
\email{daltschu@andrew.cmu.edu}

\address{Department of Mathematics, University of Athens, Panepistimiopolis 157 84, Athens, Greece}
\email{pdodos@math.uoa.gr}

\address{Department of Mathematical Sciences, Carnegie Mellon University}
\email{ktikhomi@andrew.cmu.edu}

\address{Department of Mathematics, University of Athens, Panepistimiopolis 157 84, Athens, Greece}
\email{ktyros@math.uoa.gr}

\thanks{2020 \textit{Mathematics Subject Classification}: 05C05.}
\thanks{\textit{Key words}: geometrical embeddings, geometric graphs, metric embeddings, trees.}


\begin{abstract}
A graph $G=(V,E)$ is {\it geometrically embeddable} into a normed space $X$ when there is a mapping $\zeta\colon V\to X$ such that
$\|\zeta(v)-\zeta(w)\|_X\leqslant 1$ if and only if $\{v,w\}\in E$, for all distinct $v,w\in V$. Our result is the following universal threshold for the embeddability of trees. Let $\Delta \geqslant 3$, and let $N$ be sufficiently large in terms of $\Delta$. Every $N$--vertex tree of maximal degree at most $\Delta$ is embeddable into any normed space of dimension at least $64\,\frac{\log N}{\log\log N}$, and complete trees are non-embeddable into any normed space of dimension less than $\frac{1}{2}\,\frac{\log N}{\log\log N}$. In striking contrast, spectral expanders and random graphs are known to be non-embeddable in sublogarithmic dimension. Our result is based on a randomized embedding whose analysis utilizes the recent breakthroughs on Bourgain's slicing problem.
\end{abstract}

\maketitle

\numberwithin{equation}{section}

\section{Introduction} \label{sec1}

Given a normed space $X$ and a graph $G=(V,E)$, a map $\zeta\colon V\to X$ is a {\it geometrical embedding} of $G$ into $X$ if
$$ E=\big\{\{v,w\}\subseteq V\colon \|\zeta(v)-\zeta(w)\|_X\leqslant 1\big\}. $$
Say that $G$ is non-embeddable into $X$ if no such embedding exists. Note that the mapping $\zeta$ exists if and only if $G$ is isomorphic
to an {\it intersection graph} in $X$ generated by parallel translates of the unit ball $B_X$.

The problem of identifying geometrically embeddable graphs has attracted significant attention, especially in the Euclidean setting.
It was proved in \cite{FM88} (see, also, \cite{RRS89} and \cite{RRS92}) that every $N$--vertex graph of maximal degree at most $\Delta$ admits a geometrical embedding into a Euclidean space of dimension $C\Delta^2\log N$, where $C>0$ is a universal constant. On the other hand, the Poincar\'e inequality implies that any spectral expander graph is non-embeddable into Euclidean space of sublogarithmic dimension \cite{AT25}. More generally, spectral expanders also do not embed into any spaces of sublogarithmic dimension satisfying a so-called nonlinear Poincar\'e inequality (see, for example, \cite{Na14, MN14, Esk22, ADTT24} for a discussion of such inequalities).

The smallest $m$ such that a graph $G$ can be embedded into an $m$--dimensional Euclidean space is called the {\it sphericity} of $G$, and denoted by ${\rm Sph}(G)$. The aforementioned results then imply that, suppressing dependence on $\Delta$, the sphericity of every $N$--vertex expander graph is of order $\Theta(\log N)$. We refer to \cite[Section~2]{MP93} and \cite[Section~3]{BL05} for a review of results on sphericity of various families of graphs. In the non-Euclidean setting, it was recently shown in~\cite{AT25} that with high probability, a random $\Delta$--regular graph on $N$ vertices cannot be geometrically embedded into {\it any} normed space of dimension $o(\log N)$. It remains an interesting open question whether this dimension bound is asymptotically optimal across all normed spaces.

The purpose of this note is to study geometrical embeddings of trees into arbitrary normed spaces. The setting of trees is fundamentally different from embedding random graphs despite the fact that sparse random graphs are locally tree-like, i.e., have few short cycles.
As an illustration, in \cite{FM86} it is shown that the complete rooted binary tree $T$ on $N$ vertices admits a geometrical embedding into $\ell_2$ of dimension at most $C\frac{\log N}{\log\log N}$. (Recall that, in contrast, a random $3$--regular graph on $N$ vertices
does not embed geometrically into a Euclidean space of dimension $o(\log N)$.) On the other hand, a slight adaptation of a volumetric argument from the same paper~\cite{FM86} (which we reproduce in Section \ref{sec4} for the reader's convenience) yields that $T$~is~not geometrically embeddable into {\it any} normed space of dimension less than $c\frac{\log N}{\log\log N}$, for a universal constant~$c>0$.

It is therefore natural to ask if the ``transition window'' $\Theta\big(\frac{\log N}{\log\log N}\big)$ for complete binary $N\text{--vertex}$ trees is universal across all norms. We completely resolve this problem and, moreover, provide embedding results for arbitrary bounded degree trees.

\begin{theorem}[Universal threshold for embedding of trees] \label{thm:complete-tree-embedding}
There exist absolute positive constants $\alpha_0<\alpha_1$, and for every integer $\Delta\geqslant 3$ a positive integer $N_0=N_0(\Delta)$
depending only on $\Delta$ with the following property. Let $X$ be a finite-dimensional normed space, and let $N\geqslant N_0$ be an integer.
\begin{enumerate}
\item[$(A)$] If\, $\dim (X)\leqslant \alpha_0\, \frac{\log N}{\log\log N}$, then there exists a tree $T$ on $N$ vertices
with maximum degree at most $\Delta$ that is not geometrically embeddable into $X$.
\item [$(B)$] On the other hand, if\, $\dim (X)\geqslant \alpha_1\, \frac{\log N}{\log\log N}$, then every tree $T$ on $N$ vertices
with maximum degree at most $\Delta$ is geometrically embeddable into $X$.
\end{enumerate}
\end{theorem}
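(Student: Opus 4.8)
The plan is to treat the two parts separately: part $(A)$ by a volumetric obstruction in the spirit of \cite{FM86}, and part $(B)$ by a randomized embedding whose analysis is where the real work lies.

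For $(A)$ it suffices to produce, for each large $N$, a single bad tree, and a (padded) complete binary tree does it. Given $N$, take the complete rooted binary tree of the largest depth $d$ with $2^{d+1}-1\leqslant N$ and hang the remaining vertices along a path off one leaf; all degrees stay $\leqslant 3\leqslant\Delta$ and $d\asymp\log N$. If $\zeta$ embeds this tree into $X$, the $2^d$ leaves of the complete part are pairwise non-adjacent, so their images are pairwise more than $1$ apart and the translates $\zeta(v)+\tfrac12 B_X$ over these leaves $v$ have pairwise disjoint interiors; yet each such $\zeta(v)$ is within graph distance $d$, hence within $X$-distance $d$, of the image of the root, so all these translates lie inside one translate of $(d+\tfrac12)B_X$. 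Comparing volumes yields $2^d\leqslant(2d+1)^{\dim X}$, i.e.\ $\dim X\geqslant \frac{d\log 2}{\log(2d+1)}\geqslant \alpha_0\,\frac{\log N}{\log\log N}$ for a suitable absolute $\alpha_0>0$; this is the contrapositive of $(A)$.

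For $(B)$, write $m=\dim X$. By a recursive decomposition it is enough to handle trees of depth $O(\log N)$ — such as complete $\Delta$--ary trees, which are the extremal case — since genuinely deep, hence locally path-like, portions of a tree embed trivially into a line. First normalize: apply a linear map putting $B_X$ in isotropic position, so that uniform samples from $B_X$, and more importantly from sections and thin slabs of $B_X$, are ``spread out'' — their relevant marginals have density controlled by the isotropic constant of $B_X$, which by the recent breakthroughs on Bourgain's slicing problem is bounded by an absolute constant (a polylogarithmic bound would also do, worsening $\alpha_1$ only). Root $T$, let $w(v)$ be the size of the subtree at $v$, and set the scale $\rho(v)=\max\{w(v)^{1/m},2\}$, which is nonincreasing from parent to child, equals $2$ on all deep enough vertices, and has $\rho(\mathrm{root})\asymp N^{1/m}\gg 1$. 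Build $\zeta$ top-down: send the root to $0$ with \emph{region} $\rho(\mathrm{root})B_X$; having placed $v$ at $\zeta(v)$ with region $\zeta(v)+\rho(v)B_X$, place its children $c_1,\dots,c_k$ ($k\leqslant\Delta-1$) at random points of $\zeta(v)+B_X$ — which secures the only required edges — and assign $c_i$ a region $\zeta(c_i)+\rho(c_i)B_X$, the randomization arranged so that the finitely many \emph{local} non-edges (siblings; a child versus its grandparent) land strictly above distance $1$, which is fine since $\Delta$ is fixed and $m\gtrsim\log\Delta$, and so that the subtrees of distinct $c_i$ are dispersed using a fresh random section of $B_X$ (each step of the recursion absorbing several tree-levels, so that only about $m$ such steps occur).

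The obstacle, and the whole point, is that the regions of subtrees cannot be kept disjoint: at any $v$ the children's regions have radii $\asymp\rho(v)$ but their centers lie within distance $1$ of $\zeta(v)$, so once $\rho(v)\gg 1$ — which happens at a constant fraction of the scales — these regions overlap heavily, and likewise at every level. Hence there is no ``disjoint territory'' argument; one must instead verify directly that every non-edge $\{u,u'\}$ satisfies $\|\zeta(u)-\zeta(u')\|_X>1$, and for $u,u'$ lying in distinct, overlapping child-regions this holds only with high probability. The core estimate is therefore a collision bound: conditioning on everything down to the last scale at which $u$ and $u'$ share a region, $\zeta(u)$ and $\zeta(u')$ are built from independent uniform samples of shifted, rescaled, randomly-sectioned copies of $B_X$, and one must bound $\Prob\big[\|\zeta(u)-\zeta(u')\|_X\leqslant 1\big]$ by a quantity geometrically small in the number of scales separating $u$ from $u'$, small enough that the total over all at-risk pairs is below $1$ — whence a valid embedding exists. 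This is exactly where isotropicity and the slicing bound are used: they guarantee that a uniform sample from a high-dimensional, possibly thin section of $B_X$ is genuinely spread out, so that two independent such samples rarely fall within distance $1$. Balancing these geometric gains against the exponential proliferation of vertices across scales forces $N^{1/m}\lesssim\log N$ up to a factor depending on $\Delta$, i.e.\ $m\geqslant\alpha_1\,\frac{\log N}{\log\log N}$ with $\alpha_1$ absolute; the hard part is making the collision bound uniform over the $\approx m$ scales while honestly tracking the hierarchical dependence of the random placements, since a naive union bound over all $\binom{N}{2}$ pairs is far too lossy.
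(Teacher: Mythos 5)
Your part $(A)$ is correct and is essentially the paper's (and Frankl--Maehara's) argument: pairwise-disjoint half-balls around an antichain of non-adjacent vertices, all packed inside a ball of radius comparable to the depth. The only cosmetic difference is that you use the leaves of a padded binary tree where the paper uses an independent set of size $N/2$ in a near-complete $\Delta$-ary tree.

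Part $(B)$ has genuine gaps; the hierarchical ``region'' scheme is a legitimate alternative starting point (the paper instead sums i.i.d.\ uniform vectors from $B_X$ along root-to-vertex paths, plus a small Gaussian), but your proposal stops exactly where the proof has to begin, and at least one claim is false as stated. First, the assertion that the local non-edges (siblings, child versus grandparent) are ``fine since $\Delta$ is fixed'' does not survive summation over the tree: for two independent uniform points in an isotropic $B_X$, the probability that their difference has $X$-norm at most $1$ is bounded via thin-shell concentration by roughly $\exp(-m^{\kappa/2})$, and even optimistically it is only $c^m=\exp(-c'\log N/\log\log N)$, which is far larger than $1/N$. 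Summed over the $\Theta(N)$ sibling pairs this diverges, so even the ``easy'' local constraints cannot be closed by a union bound; this is precisely why the paper runs the asymmetric Lov\'asz local lemma over the dependency graph of intersecting tree-paths, with weights $b_k$ graded by graph distance. Second, your core ``collision bound'' is asserted rather than proved, and in your scheme it is quantitatively in trouble: at deep scales your regions have radius $O(1)$, so the conditional spread of $\zeta(u)-\zeta(u')$ does not grow with the separation, and the best the slicing bound can give is of the form $L^m\lambda(\mathrm{const}\cdot B_X)\leqslant(CL)^m$ with $C,L\geqslant 1$ --- not small. One needs either separation growing with graph distance (the paper gets $k^{c_1}$ at distance $k$ from Lemma \ref{lem:prob_bound_1}, because the difference is a sum of $k$ i.i.d.\ vectors) or an independent anti-concentration mechanism for very distant pairs (the paper's Gaussian regularizer $\delta\sum_e\mathbf{G}_e$, which settles all pairs at distance greater than $\ell_0$ by a direct union bound). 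You explicitly acknowledge that the union bound over all pairs is too lossy, but you propose no replacement. Finally, the reduction to trees of depth $O(\log N)$ is unjustified: a long path embeds into a line in isolation, but gluing that to the rest of the embedding while keeping every cross-pair at distance greater than $1$ is exactly the kind of global constraint the whole argument is about. As written, part $(B)$ is a plausible program, not a proof.
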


\begin{remark} \label{rem1.1}
The proof of Theorem \ref{thm:complete-tree-embedding} yields the estimate $\frac12 \leqslant \alpha_0 <\alpha_1\leqslant 64$.
\end{remark}

Let us discuss the result in the context of intersection graphs. Given a collection of parallel translates $\{x_i+K\colon i\in I\}$ of a convex body $K$ in $\R^m$, the {\it intersection graph} is formed by placing an edge between $i$ and $j$ for all pairs of translates $x_i+K$ and $x_j+K$ that have a nonempty intersection. Our result implies that for any centrally-symmetric convex body $K$, every bounded degree $N$--vertex tree can be realized as an intersection graph in an $\big\lceil\alpha_1\, \frac{\log N}{\log\log N}\big\rceil$--dimensional vector space, using translates of $K$.

\begin{corollary}
Let $\Delta\geqslant 3$ be a positive integer, and let $\alpha_1$ and $N_0$ be as in Theorem~\ref{thm:complete-tree-embedding}. Also let $N\geqslant N_0$ be an integer, let $T$ be a tree on $N$ vertices with maximum degree at most $\Delta$, and let $K$ be any centrally-symmetric convex body in a Euclidean space of dimension $\big\lceil\alpha_1\, \frac{\log N}{\log\log N}\big\rceil$. Then there exists an intersection graph for translates of $K$ which is isometric to $T$.
\end{corollary}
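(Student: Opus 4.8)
The plan is to obtain the corollary as an essentially immediate consequence of part~$(B)$ of Theorem~\ref{thm:complete-tree-embedding}; the only work is to translate between the language of intersection graphs of translates of a convex body and the language of geometrical embeddings into normed spaces. First I would recall that a centrally-symmetric convex body $K$ in an $m$-dimensional Euclidean space is exactly the closed unit ball of a norm: since $K$ is compact, convex, symmetric and has nonempty interior, the origin lies in its interior, and the Minkowski functional $\|x\|_K:=\inf\{t>0\colon x\in tK\}$ defines a norm on $\R^m$ with $\{x\colon\|x\|_K\leqslant 1\}=K$. Writing $X_K:=(\R^m,\|\cdot\|_K)$, this is an $m$-dimensional normed space, and since $m=\big\lceil\alpha_1\,\frac{\log N}{\log\log N}\big\rceil\geqslant\alpha_1\,\frac{\log N}{\log\log N}$, Theorem~\ref{thm:complete-tree-embedding}$(B)$ applies to $X_K$: with $T=(V,E)$, it produces a map $\zeta\colon V\to\R^m$ such that $E=\big\{\{v,w\}\colon\|\zeta(v)-\zeta(w)\|_K\leqslant 1\big\}$.

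Next I would pass to translates by a harmless rescaling. Put $x_v:=2\zeta(v)$ for $v\in V$ and consider the family $\mathcal{T}:=\{x_v+K\colon v\in V\}$. The elementary geometric input is that, for a centrally-symmetric convex body, $K-K=K+K=2K$: symmetry gives $K-K=K+K$, convexity gives $\tfrac12(a+b)\in K$ for all $a,b\in K$ and hence $K+K\subseteq 2K$, and $2K\subseteq K+K$ is trivial. Consequently, for distinct $v,w\in V$,
\begin{align*}
(x_v+K)\cap(x_w+K)\neq\varnothing
&\iff x_v-x_w\in K-K=2K\\
&\iff \big\|\tfrac12(x_v-x_w)\big\|_K\leqslant 1\\
&\iff \|\zeta(v)-\zeta(w)\|_K\leqslant 1\\
&\iff \{v,w\}\in E.
\end{align*}
Thus the intersection graph of $\mathcal{T}$, regarded on the index set $V$ via $v\mapsto x_v+K$, has edge set exactly $E$.

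It remains to check that the translates in $\mathcal{T}$ are genuinely distinct, so that $|\mathcal{T}|=N$. If $\zeta(v)=\zeta(w)$ for some $v\neq w$, then $\|\zeta(v)-\zeta(w)\|_K=0\leqslant 1$, so $\{v,w\}\in E$, and moreover every vertex $u\notin\{v,w\}$ adjacent to $v$ is adjacent to $w$ and conversely; but in a tree on $N\geqslant N_0\geqslant 3$ vertices such an adjacent pair with identical neighborhoods would either create a triangle or force $N=2$, a contradiction. Hence $\zeta$, and therefore $v\mapsto x_v$, is injective, and the intersection graph of $\mathcal{T}$ is isomorphic (in fact, identifying the two vertex sets, equal) to $T$, and in particular isometric to $T$ for the graph metric. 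I do not expect any genuine obstacle here: the entire content lies in Theorem~\ref{thm:complete-tree-embedding}, and the only points needing a moment's care are the dilation $x\mapsto 2x$ (which converts ``distance $\leqslant1$'' into ``translates of $K$ intersect'') via the identity $K-K=2K$, and the short twins argument ruling out coincidences among the $x_v$.
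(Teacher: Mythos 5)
Your proposal is correct and matches the paper's (implicit) argument: the corollary is stated as an immediate consequence of Theorem~\ref{thm:complete-tree-embedding}$(B)$ via the standard identification of a centrally-symmetric convex body with the unit ball of a norm, together with the dilation by $2$ and the identity $K-K=2K$. The injectivity check via the twins/triangle argument is a nice extra detail that the paper leaves unstated.
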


\begin{remark} \label{rem1.3}
The geometric embedding $\zeta\colon V(T)\to X$ obtained in the proof of part $(B)$ of Theorem~\ref{thm:complete-tree-embedding} is $2$-Lipschitz when the tree $T$ is equipped with the shortest-part distance $\dist_T(\cdot,\cdot)$, and it satisfies
$\|\zeta(u)-\zeta(v)\|_X \gtrsim \sqrt[10]{\dist_T(u,v)}$ for all $u,v\in V(T)$. That is, the inverse of $\zeta$ is $\frac{1}{10}\text{-H\"{o}lder}$. This should be compared with a well-known result of Bourgain \cite{Bou86} (see, also, \cite{Ma99}) asserting that any embedding of the complete rooted binary tree on $N$ vertices into a Hilbert space\footnote{In fact, binary trees do not embed with bounded bi-Lipschitz distortion into any super-reflexive spaces (e.g., all $\ell_p$ spaces for $1<p<\infty$.)}
necessarily has bi-Lipschitz distortion at least $\Omega\big(\sqrt{\log\log N}\big)$.\!
\end{remark}

We conclude with some open research questions, followed by a proof overview.

\begin{problem}[Optimal constants in Theorem~\ref{thm:complete-tree-embedding}]
Determine optimal values of the constants $\alpha_0$ and $\alpha_1$ in Theorem~\ref{thm:complete-tree-embedding}. Are they equal up to $o(1)$ additive term?
\end{problem}

\begin{problem}[Trees of growing maximal degree]
Determine an explicit function $m := m(N,\Delta)$ and some universal constants $\alpha_0$ and $\alpha_1$ so that the following holds. Any tree on $N$ vertices with maximum degree $\Delta$ embeds into \textit{any} normed space $X$ of dimension at least $\alpha_1\, m$. And, no complete tree of degree $\Delta$ embeds into any space of dimension less than $\alpha_0\, m$.
\end{problem}

\subsection*{Proof overview}

Part (A) of Theorem~\ref{thm:complete-tree-embedding} is essentially due to Frankl and Maehara. The argument is volumetric: in low dimension there is insufficient volume to pack the required number of disjoint translates of a fixed fraction of the unit ball. An explicit witness is given by the complete rooted $(\Delta-1)$--regular tree of appropriate height.

We now outline the proof of part (B), which is our main contribution. Let $X=(\mathbb{R}^m,\|\cdot\|_X)$ be a normed space of dimension $m$, and let $T$ be a tree on $N$ vertices with maximum degree at most~$\Delta$. We construct a randomized embedding $\boldsymbol{\zeta}\colon V(T)\to X$ in Section~\ref{subsection_Constr_Rand_Emb} as follows. To each edge $e$ of $T$ we assign an independent random vector $\mathbf{Y}_e$, drawn uniformly from the unit ball $B_X$. After fixing a root $\mathrm{o}$, we map each vertex $v$ to the sum of the vectors along the unique path from $\mathrm{o}$ to $v$. In addition, we add a small independent Gaussian perturbation along each edge. Thus,
\[ \boldsymbol{\zeta}(v) :=
\sum_{e\in P(\mathrm{o},v)} \mathbf{Y}_e \;+\; \text{(Gaussian regularization)}. \]
Our goal is to show that with positive probability this embedding is \emph{geometric}:  two vertices are adjacent in $T$ if and only if their images are at spatial distance at most one.  The upper bound for adjacent pairs is straightforward. The main difficulty is proving that for every non-adjacent $\{u,v\}$,
\[ \| \boldsymbol{\zeta}(u)-\boldsymbol{\zeta(v)}\|_X > 1. \]
We refer to the complementary event—namely that a non-adjacent pair is mapped to spatial distance at most one—as the \emph{bad event} associated with $\{u,v\}$.

If $u$ and $v$ have graph distance $\ell$, then their spatial difference is a sum of  $\ell$ independent edge vectors, together with a Gaussian contribution. Thus, the problem reduces to controlling lower deviations of sums of independent log-concave vectors. The available anti-concentration bounds depend naturally on $\ell$, which leads to a division into three regimes, defined in \eqref{eq:009}.

For \textbf{small} $\ell$, we use a thin-shell estimate for isotropic log-concave distributions that follows from \cite{Kl07}; see Lemma \ref{lem:prob_bound_2}. In this regime the dependency structure is sparse: the bad event for a pair $\{u,v\}$ depends only on the random edge vectors along the path $P(u,v)$. Two such events are independent unless the corresponding paths intersect. The resulting dependency graph is sufficiently sparse that the Lovász local lemma applies, allowing us to rule out all short-distance bad events simultaneously. Note that avoidance of a union bound is crucial: there are potentially of order $N$ pairs of vertices at distance two, which is super-exponential in the dimension $m$, yet the probability of the event $\{\|\boldsymbol{\zeta}(u) -\boldsymbol{\zeta}(v)\|_X < 1\}$ decays only inverse-polynomially in the dimension.

For \textbf{intermediate} $\ell$, we invoke the stronger small-ball bounds  for sums of independent log-concave vectors provided by Lemma \ref{lem:prob_bound_1}, which follow from the recent resolution of the hyperplane slicing conjecture \cite{KL24b}. Compared to the regime of small $\ell$, this improved estimate compensates for the larger dependency neighborhoods that arise in the local lemma argument.

Finally, for \textbf{large} $\ell$, lower-deviation estimates on sums of independent uniform vectors suitable for this scaling are not readily available, so we instead rely solely on anti-concentration estimates for the Gaussian regularization.  Existing small-ball estimates conveniently provide the required super-exponential lower-tail decay needed to directly apply a naive union bound over all large-distance bad events.

Crucially, stochastic independence of the Gaussian regularization from the uniform edge vectors allows the different scales to be decoupled: the local lemma argument depends only on the uniform edge variables, while the union bound for large graph distances depends only on the Gaussian part. Combining the local lemma for short and intermediate scales with the Gaussian-based union bound at large scales allows us to conclude that, with positive probability, the embedding is geometric.

\section{Background material} \label{sec2}

\subsection{Notation}

All graphs in this paper have no multiple edges and no self-loops. Given a graph~$G$, we denote its vertex set by $V(G)$ and its edge set by $E(G)$. For every $v\in V(G)$, the \emph{neighbourhood of $v$ in $G$} is denoted by $N_G(v):= \big\{u\in V(G)\colon \{u,v\}\in E(G)\big\}$, and $\mathrm{dist}_G(\cdot,\cdot)$ denotes the shortest path graph distance on $V(G)$. We recall that a \emph{tree} is a connected graph with no cycles. We also recall that a \emph{normed space} $(X,\|\cdot\|)$ is a vector space $X$ equipped with a norm; for a normed space $(X,\|\cdot\|)$, by $B_X:=\big\{x\in X\colon \|x\|\leqslant 1\big\}$ we denote its closed unit ball.

\begin{definition} \label{def2.1}
Let $T$ be a tree, and let $u,v\in V(T)$. We say that a set $P\subseteq E(T)$ \emph{connects $u$ and $v$} if there exist $u_0,\ldots,u_p\in V$, where $p=|P|$, such that $u_0=u$, $u_p=v$ and $P=\big\{\{u_{i-1},u_i\}\colon i\in[p]\big\}$.

If $u\neq v$, then $P(u,v)$ denotes the smallest, under inclusion, set of edges that connects $u$ and $v$; by convention, we set $P(u,v)=\emptyset$ if $u=v$. We shall refer to sets of the form $P(u,v)$ as \emph{paths}.
\end{definition}

\subsection{Isotropic random vectors, and normed spaces in isotropic position}

Recall that a random vector $\mathbf{U}$ in $\mathbb{R}^m$ is \textit{isotropic} if its components are mean-zero random variables, and its covariance matrix is the identity.

Now, let $X=(\mathbb{R}^m,\|\cdot\|_X)$ be an $m$-dimensional normed space, and let $\mathbf{X}$ be a random vector in~$\mathbb{R}^m$ uniformly distributed in $B_X$. We say that $X$ is in \textit{isotropic position} if the random vector $\mathbf{X}$ is isotropic;
we emphasize that in asymptotic geometric analysis literature, an alternative definition involving normalization of the Lebesgue volume of $B_X$ is used occasionally. It is a standard fact that any finite-dimensional normed space can be placed in isotropic position by applying an invertible linear transformation
(see, e.g., \cite{KL24a}).

\subsection{Asymptotic geometric analysis}

We will need two deep results from asymptotic geometric analysis. The first one is the following thin-shell estimate due to Klartag \cite{Kl07}; we note that optimal results in this direction were very recently obtained by Klartag--Lehec \cite{KL25}.

\begin{theorem}[Thin-shell concentration for log-concave measures] \label{thm:thinshell}
There exist absolute constants $0<\kappa <1$ and $C>0$ with the following property. If\, $\mathbf{U}$ is an isotropic random vector in $\mathbb{R}^m$ with a log-concave\footnote{A function $f\colon \mathbb{R}^m\to [0,\infty)$ is \emph{log-concave} if, setting $K:=\{x\in\mathbb{R}^m\colon f(x) > 0\}$, we have that the function $\log f\colon K\to\mathbb{R}$ is concave. (Note that the set $K$ is convex.)} distribution, then
\begin{equation} \label{eq2.2}
\mathbb{P}\Big(\big|\|\mathbf{U}\|_2 - \sqrt{m}\big|  \geqslant m^{\frac12 - \kappa} \Big) \leqslant C \exp\big(-m^{\kappa}\big).
\end{equation}
\end{theorem}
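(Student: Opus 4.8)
The plan is to derive \eqref{eq2.2} from a \emph{variance bound} for $\|\mathbf U\|_2$, followed by an \emph{exponential upgrade} of that bound into a genuine tail estimate; essentially all of the difficulty is concentrated in the variance bound. Suppose first that we have shown
\begin{equation*}
\Var\big(\|\mathbf U\|_2\big)\leqslant m^{1-2\kappa}
\end{equation*}
for a suitable absolute constant $\kappa\in(0,\tfrac14)$ (with room to spare). Since $\mathbf U$ is isotropic, $\mathbb{E}\|\mathbf U\|_2^2=m$, so Jensen's inequality gives $\mathbb{E}\|\mathbf U\|_2\leqslant\sqrt m$, while $(\mathbb{E}\|\mathbf U\|_2)^2=m-\Var(\|\mathbf U\|_2)\geqslant m-m^{1-2\kappa}$; hence $0\leqslant\sqrt m-\mathbb{E}\|\mathbf U\|_2=\Var(\|\mathbf U\|_2)/\big(\sqrt m+\mathbb{E}\|\mathbf U\|_2\big)\leqslant m^{1/2-2\kappa}$. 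Thus $\sqrt m$ and $\mathbb{E}\|\mathbf U\|_2$ differ by less than $\tfrac12 m^{1/2-\kappa}$ once $m$ is large, and it suffices to prove \eqref{eq2.2} with $\mathbb{E}\|\mathbf U\|_2$ in place of $\sqrt m$ and $\tfrac12 m^{1/2-\kappa}$ in place of $m^{1/2-\kappa}$.

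For the upgrade, note that $x\mapsto\|x\|_2$ is $1$-Lipschitz. An isotropic log-concave probability measure on $\mathbb{R}^m$ satisfies a Poincar\'e inequality with constant at most a fixed power of $\log m$ (the recently established polylogarithmic bound on the Poincar\'e constant of isotropic log-concave measures, towards the Kannan--Lov\'asz--Simonovits conjecture), and a Poincar\'e inequality self-improves, by the classical Gromov--Milman argument, to sub-exponential concentration for Lipschitz functions; applied to $f=\|\cdot\|_2$ this yields
\begin{equation*}
\mathbb{P}\big(\,\big|\,\|\mathbf U\|_2-\mathbb{E}\|\mathbf U\|_2\,\big|\geqslant t\,\big)\leqslant C_0\exp\!\big(-t/\mathrm{polylog}(m)\big).
\end{equation*}
(The same Poincar\'e inequality applied directly to $f=\|\cdot\|_2$ already gives $\Var\|\mathbf U\|_2\leqslant\mathrm{polylog}(m)$, which subsumes the variance bound above; I keep the two steps separate because the variance bound is the historically difficult point, see below.) Fixing $\kappa$ to be any number in $(0,\tfrac14)$ and taking $t=\tfrac12 m^{1/2-\kappa}$, the exponent $t/\mathrm{polylog}(m)$ exceeds $m^\kappa$ once $m$ is beyond some threshold $m_0$, so the right-hand side is at most $C_0\exp(-m^\kappa)$ for such $m$; for $m\leqslant m_0$ the claimed estimate is trivial after enlarging the constant. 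Combined with the first paragraph, this gives \eqref{eq2.2}.

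The main obstacle is the variance bound itself if one does not wish to invoke the full Kannan--Lov\'asz--Simonovits machinery; this is precisely the content of Klartag's paper \cite{Kl07}. Concentration of $\|\mathbf U\|_2$ near $\sqrt m$ at the coarse scale $\sqrt m$ (by a constant multiplicative factor) is classical, via Borell's lemma or Paouris's deviation inequality, but pinning $\mathbb{E}\|\mathbf U\|_2$ down to within a power of $m$ of $\sqrt m$ goes well beyond the classical moment-comparison estimates. To do it, Klartag carries out a delicate analysis of log-concave measures --- in particular of the logarithmic Laplace transform of $\mathbf U$ and its Hessian (via the Pr\'ekopa--Leindler and Brascamp--Lieb inequalities), together with an induction on the dimension. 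I would not reproduce that argument here: in a self-contained treatment I would cite \cite{Kl07}, while, granting the recent breakthroughs on slicing and isoperimetry, the short three-step argument above already suffices.
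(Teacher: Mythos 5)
The paper does not prove Theorem~\ref{thm:thinshell} at all: it is imported verbatim as a known result of Klartag, with a bare citation to \cite{Kl07}, so there is no internal argument to compare yours against. Your proposal is a sound reduction modulo the literature. The two elementary steps check out: from $\mathbb{E}\|\mathbf U\|_2^2=m$ and a variance bound $\Var(\|\mathbf U\|_2)\leqslant m^{1-2\kappa}$ one indeed gets $0\leqslant \sqrt m-\mathbb{E}\|\mathbf U\|_2\leqslant m^{1/2-2\kappa}$, and sub-exponential concentration for the $1$-Lipschitz function $\|\cdot\|_2$ at scale $t=\tfrac12 m^{1/2-\kappa}$ with rate $\exp(-t/\mathrm{polylog}(m))$ beats $\exp(-m^{\kappa})$ for any fixed $\kappa<\tfrac14$ and $m$ large, with small $m$ absorbed into $C$. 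The honest caveat, which you state yourself, is that all the substance lives in the cited inputs: either Klartag's original logarithmic-Laplace-transform argument for the variance bound, or the recent polylogarithmic bound on the Poincar\'e constant of isotropic log-concave measures plus Gromov--Milman. Since the paper itself uses the theorem as a black box (and explicitly says its proof is insensitive to the value of $\kappa$), your treatment is entirely consistent with the paper's; just make sure the citation you lean on actually delivers the Poincar\'e (or thin-shell) estimate in the quantitative form you invoke, rather than only the weaker classical $\Var(\|\mathbf U\|_2)=o(m)$ statements.
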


We will also need the following very recent result of Klartag and Lehec \cite{KL24b} that yields an affirmative solution to the celebrated Bourgain's slicing conjecture.

\begin{theorem}[Anticoncentration for log-concave measures via slicing] \label{thm:slicing}
There exists an absolute constant $L>0$ with the following property. If\, $\mathbf{U}$ is an isotropic random vector in $\mathbb{R}^m$ with a log-concave density $f$, then for any $x \in \mathbb{R}^m$,
\begin{equation} \label{eq2.3}
f(x) \leqslant L^{m}.
\end{equation}
\end{theorem}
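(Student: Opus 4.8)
Theorem~\ref{thm:slicing} is the affirmative resolution of Bourgain's slicing problem, so no short proof is to be expected; the plan is to follow the stochastic-localization argument of Klartag and Lehec \cite{KL24b}. \textbf{Reduction.} First I would reduce \eqref{eq2.3} to the estimate $\|f\|_\infty\leqslant C^m$ for an absolute constant $C$: since $f(x)\leqslant\|f\|_\infty$ trivially, and since by Fradelizi's lemma the maximum of an isotropic log-concave density exceeds its value at the barycenter (here, the origin) by at most a factor $e^m$, it is enough to bound $f$ at the origin by $C^m$; equivalently, to show that the isotropic constant $\|f\|_\infty^{1/m}$ is a universal constant. One might hope to extract this from Theorem~\ref{thm:thinshell}, because thin-shell bounds are known to imply slicing bounds of the same order (Eldan--Klartag); but the best available thin-shell constant is $O(\sqrt{\log m})$ and the variance conjecture is still open, so this route alone does not yield an absolute constant.

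\textbf{Stochastic localization.} Writing $\mu$ for the law of $\mathbf U$, I would introduce the measure-valued martingale $(\mu_t)_{t\geqslant 0}$ with $\mu_0=\mu$ and density
\[
f_t(x)\ \propto\ f(x)\exp\!\Big(\langle\theta_t,x\rangle-\tfrac{t}{2}\|x\|_2^2\Big),
\]
where $d\theta_t=dW_t+a_t\,dt$ for a standard Brownian motion $W_t$ in $\R^m$, $a_t:=\int x\,d\mu_t(x)$ is the barycenter, and $A_t:=\int (x-a_t)(x-a_t)^{\!\top}\,d\mu_t(x)$ is the covariance of $\mu_t$. The ingredients I would invoke are standard consequences of It\^o's formula: for each fixed $x$ the process $t\mapsto f_t(x)$ is a nonnegative martingale with $df_t(x)=f_t(x)\langle x-a_t,dW_t\rangle$, so $f(x)=\Exp f_t(x)$ for every $t$; the barycenter obeys $da_t=A_t\,dW_t$; and the covariance obeys $dA_t=-A_t^2\,dt+T_t[dW_t]$, where $T_t$ is the (centered) third-moment tensor of $\mu_t$. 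The deterministic drift $-A_t^2$ is what drives $A_t$ to contract and $\mu_t$ to localize onto a point distributed as $\mu$.

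\textbf{Conclusion, and the main obstacle.} One then runs the localization up to a stopping time $\tau=\min\{t_0,\inf\{t:\|A_t\|_{\mathrm{op}}>K\}\}$ for a fixed constant $K$ and a suitable $t_0$, and -- through an argument I would import essentially verbatim from \cite{KL24b} -- converts the optional-stopping identity $f(0)=\Exp f_\tau(0)$, the Gaussian factor $e^{-t_0\|x\|_2^2/2}$, and the covariance control into the bound $f(0)\leqslant C^m$; the delicate point there is that the contribution of the event $\{\tau<t_0\}$ must be made negligible. Everything hinges on showing that $\|A_t\|_{\mathrm{op}}$ does not blow up before a time $t_0$ of order $1$, and I expect essentially all of the difficulty to sit here: although $\mathrm{tr}(A_t)$ is automatically non-increasing, individual eigendirections can still be pushed up by the martingale term $T_t[dW_t]$, and controlling $\|A_t\|_{\mathrm{op}}$ sharply is precisely where every earlier bound on the isotropic constant lost a power of $m$ (Bourgain's $m^{1/4}\log m$, Klartag's $m^{1/4}$, Chen's $m^{o(1)}$, the polylogarithmic bound of Klartag--Lehec). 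The passage to an \emph{absolute} constant in \eqref{eq2.3} relies on Guan's recent sharpened estimate for the evolution of $A_t$ under stochastic localization, which finally removes this dimensional loss; feeding that estimate into the scheme above, taking $t_0=\Theta(1)$, and reading off the density bound completes the argument, the reductions of the first step and the stochastic-calculus identities of the second being routine by comparison.
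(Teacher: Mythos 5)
The paper offers no proof of Theorem~\ref{thm:slicing}: it is imported as a black box from Klartag--Lehec \cite{KL24b} (which in turn relies on Guan's estimate \cite{guan2024note}), and your write-up ultimately does the same, since the one genuinely hard step --- controlling $\|A_t\|_{\mathrm{op}}$ along stochastic localization for a time of order one --- is explicitly deferred to those references. Your roadmap of that argument (the Fradelizi reduction of $\|f\|_\infty$ to the value at the barycenter, i.e.\ to the isotropic constant; the localization SDEs for $f_t$, $a_t$, $A_t$; the optional-stopping identity $f(0)=\Exp f_\tau(0)$) is accurate as a survey, up to the minor point that $\mathrm{tr}(A_t)$ is a supermartingale rather than pathwise non-increasing; so your treatment is consistent with the paper's, but it should be read as a citation with commentary rather than an independent proof.
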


\begin{remark}
Thin-shell estimates for log-concave distributions have been actively studied in the asymptotic geometric analysis literature, leading to strengthening of the result in \cite{Kl07}, as a consequence of the tremendous recent progress on the {\it KLS conjecture} and the aforementioned resolution of the slicing conjecture. We do not attempt to give here a thorough overview of the history of these problems, and refer instead to recent works \cite{Bi25, Ch21, KL24a, Gu24, KL24b} for further references. Our proof of the main result is not sensitive to the value of the constant $\kappa$ in Theorem~\ref{thm:thinshell}.
\end{remark}

\subsection{Gaussian concentration}

We will need the following simple consequence of gaussian concentration for Lipschitz functions.
\begin{fact}\label{cor:gaussian_tails}
If\, $\mathbf{G}$ is a random vector in $\mathbb{R}^m$ with i.i.d. standard normal entries, then for~any~$t>\sqrt{m}$,
\begin{equation} \label{eq2.4}
\mathbb{P}\big( \|\mathbf{G}\|_2>t \big) \leqslant 2\exp \Big( -\frac{(t-\sqrt{m})^2}{2} \Big).
\end{equation}
\end{fact}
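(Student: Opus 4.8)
This is a routine consequence of Gaussian concentration, so the plan is simply to assemble three standard ingredients. First, the Euclidean norm $x\mapsto\|x\|_2$ is $1$-Lipschitz on $\mathbb{R}^m$. Second, the Gaussian concentration inequality for Lipschitz functions states that if $f\colon\mathbb{R}^m\to\mathbb{R}$ is $1$-Lipschitz with respect to $\|\cdot\|_2$ and $\mathbf{G}$ has i.i.d. standard normal entries, then $\mathbb{P}\big(f(\mathbf{G})\geqslant\mathbb{E}f(\mathbf{G})+s\big)\leqslant\exp(-s^2/2)$ for every $s\geqslant 0$; I would apply this with $f=\|\cdot\|_2$. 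Third, by Cauchy--Schwarz (equivalently, Jensen's inequality), $\mathbb{E}\|\mathbf{G}\|_2\leqslant\big(\mathbb{E}\|\mathbf{G}\|_2^2\big)^{1/2}=\sqrt{m}$.

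With these in hand, for $t>\sqrt{m}$ put $s:=t-\mathbb{E}\|\mathbf{G}\|_2$; since $\mathbb{E}\|\mathbf{G}\|_2\leqslant\sqrt{m}<t$, we have $s\geqslant t-\sqrt{m}>0$, and therefore
\[
\mathbb{P}\big(\|\mathbf{G}\|_2>t\big)\;\leqslant\;\mathbb{P}\big(\|\mathbf{G}\|_2\geqslant\mathbb{E}\|\mathbf{G}\|_2+s\big)\;\leqslant\;\exp\!\big(-\tfrac{s^2}{2}\big)\;\leqslant\;\exp\!\Big(-\tfrac{(t-\sqrt{m})^2}{2}\Big),
\]
where the final step uses that $u\mapsto\exp(-u^2/2)$ decreases on $[0,\infty)$ together with $s\geqslant t-\sqrt{m}>0$. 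In fact this establishes the inequality with constant $1$ in place of the stated~$2$.

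\textbf{Main obstacle.} There is essentially nothing hard here; the only point requiring care is the hypothesis $t>\sqrt{m}$, which is precisely what makes $s>0$ so that both the concentration bound (valid for $s\geqslant0$) and the monotonicity step are applicable. An alternative, and perhaps the reason the harmless factor $2$ is recorded in the statement, is to run the argument through concentration about the \emph{median} $M$ of $\|\mathbf{G}\|_2$: the Gaussian isoperimetric inequality gives $\mathbb{P}\big(\|\mathbf{G}\|_2>M+s\big)\leqslant\tfrac12\exp(-s^2/2)$, and since $M\leqslant\sqrt{m}$ (the median of a $\chi^2_m$ variable is at most $m$), the same conclusion follows.
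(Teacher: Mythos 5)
Your proof is correct and follows exactly the paper's route: the $1$-Lipschitz property of $\|\cdot\|_2$, the bound $\mathbb{E}\|\mathbf{G}\|_2\leqslant\sqrt{m}$ via Jensen, and Gaussian concentration for Lipschitz functions. The only difference is that you spell out the final computation (and correctly note the factor $2$ is not needed), which the paper leaves implicit.
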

\begin{proof}
Notice that the map $f\colon \mathbb{R}^m\to \mathbb{R}$ defined by $f(x) = \|x\|_2$, is $1$-Lipschitz, and it satisfies $\mathbb{E}\big[f(\mathbf{G})\big] \leqslant \sqrt{m}$. Using these observations, the result follows from the standard gaussian concentration for Lipschitz functions (see, e.g., \cite[Sections 1.1--1.2]{Le01}).
\end{proof}

\subsection{John's theorem}

Recall that if $X$ and $Y$ are two normed spaces of the same dimension, then their \emph{Banach--Mazur distance} is defined by
\[ d_{\mathrm{BM}}(X,Y):= \inf\big\{ \|T\|\cdot\|T^{-1}\|\colon T\colon X\to Y \text{ is an isomorphism}\big\}. \]
Also recall that, by a classical result due to John (see, e.g., \cite{JL01}), we have
$d_{\mathrm{BM}}(X,\ell_2^m)\leqslant \sqrt{m}$ for any $m$-dimensional normed space $X$.
We will use this estimate in the following form.
\begin{fact} \label{lem:John_position}
Let $X=(\mathbb{R}^m,\|\cdot\|_X)$ be an $m$-dimensional normed space. Then there exist vectors
$x_1,\dots,x_m\in \mathbb{R}^m$ such that for any $a_1,\dots,a_m\in \mathbb{R}$,
\begin{equation} \label{eq2.5}
\Big(\sum_{i=1}^{m}a_i^2\Big)^\frac{1}{2} \leqslant
\Big\| \sum_{i=1}^{m}a_i x_i\Big\|_X \leqslant \sqrt{m} \Big(\sum_{i=1}^{m}a_i^2\Big)^\frac{1}{2}.
\end{equation}
\end{fact}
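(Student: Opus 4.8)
The plan is to deduce this directly from the estimate $d_{\mathrm{BM}}(X,\ell_2^m)\leqslant\sqrt m$ recalled just above. First I would unwind the definition of the Banach--Mazur distance: there is a linear isomorphism $T\colon X\to\ell_2^m$ with $\|T\|_{X\to\ell_2^m}\cdot\|T^{-1}\|_{\ell_2^m\to X}\leqslant\sqrt m$. Since the two operator norms scale reciprocally when $T$ is multiplied by a positive scalar, I may renormalise $T$ so that $\|T\|_{X\to\ell_2^m}\leqslant 1$ while keeping $\|T^{-1}\|_{\ell_2^m\to X}\leqslant\sqrt m$; concretely this gives $\|Tx\|_2\leqslant\|x\|_X$ for every $x\in\mathbb R^m$ and $\|T^{-1}y\|_X\leqslant\sqrt m\,\|y\|_2$ for every $y\in\mathbb R^m$.

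Next I would set $x_i:=T^{-1}(e_i)$, where $e_1,\dots,e_m$ denotes the standard basis of $\mathbb R^m$. For scalars $a_1,\dots,a_m$, linearity of $T^{-1}$ gives $\sum_{i=1}^m a_i x_i=T^{-1}(a)$ with $a=(a_1,\dots,a_m)$. The upper bound in \eqref{eq2.5} is then immediate from $\|T^{-1}\|_{\ell_2^m\to X}\leqslant\sqrt m$. For the lower bound, apply the inequality $\|T\|_{X\to\ell_2^m}\leqslant 1$ to the vector $T^{-1}(a)$: this yields $\|a\|_2=\|T(T^{-1}(a))\|_2\leqslant\|T^{-1}(a)\|_X$, which is precisely the left-hand inequality in \eqref{eq2.5}.

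Alternatively, one can run the same argument geometrically using the maximal-volume (John) ellipsoid $\mathcal E\subseteq B_X$, which satisfies $\mathcal E\subseteq B_X\subseteq\sqrt m\,\mathcal E$; writing $\mathcal E=S(B_2^m)$ for an invertible linear map $S$ and taking $x_i:=\sqrt m\,S(e_i)$, the two required inequalities translate directly into these two inclusions. In either formulation there is essentially no obstacle: the entire content sits in John's theorem, and the only point demanding any care is fixing the normalisation of the isomorphism (equivalently, the scaling of the ellipsoid) so that the constants appear as $1$ and $\sqrt m$ rather than as their reciprocals.
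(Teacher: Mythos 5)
Your argument is correct and is exactly the derivation the paper intends: Fact~\ref{lem:John_position} is stated as an immediate reformulation of John's estimate $d_{\mathrm{BM}}(X,\ell_2^m)\leqslant\sqrt{m}$, and your normalisation of the isomorphism (or, equivalently, the John-ellipsoid inclusions $\mathcal{E}\subseteq B_X\subseteq\sqrt{m}\,\mathcal{E}$) is the standard way to fix the constants as $1$ and $\sqrt{m}$. The only micro-point worth a word is that the infimum defining $d_{\mathrm{BM}}$ is attained in finite dimensions (by compactness, or because John's theorem is proved via the maximal-volume ellipsoid in the first place), which your second, geometric formulation already handles.
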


\subsection{Lov\'{a}sz local lemma}

The last result we will need is the so-called asymmetric version of Lov\'{a}sz local lemma (see, e.g., \cite[Lemma 5.1.1]{AS16}).

\begin{lemma}[Asymmetric Lov\'{a}sz local lemma] \label{lem:LLL}
Let $G$ be a graph, and let $(A_v)_{v\in V(G)}$ be a collection of measurable events in some probability space such that for every $v\in V(G)$, the event $A_v$ is independent of the events $\big\{A_u\colon u\not\in N_G(v)\cup\{v\}\big\}$. Assume that there exists $h\colon V\to [0,1)$ such that for any $v\in V(G)$,
\begin{equation} \label{eq2.6}
\mathbb{P}(A_v) \leqslant h(v) \prod_{u\in N_G(v)}\big(1-h(u)\big).
\end{equation}
Then,
\begin{equation} \label{eq2.7}
\mathbb{P}\Bigg( \bigcap_{v\in V(G)} A_v^{\complement} \Bigg) \geqslant \prod_{v\in V(G)} \big(1-h(v)\big).
\end{equation}
\end{lemma}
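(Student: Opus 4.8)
The plan is to reduce \eqref{eq2.7} to the following standard strengthening, which I would prove by induction on the size of the conditioning set: for every $v\in V(G)$ and every set $S\subseteq V(G)$ with $v\notin S$ and $\mathbb{P}\big(\bigcap_{u\in S}A_u^\complement\big)>0$, one has
\[ \mathbb{P}\Big(A_v \,\Big|\, \bigcap_{u\in S}A_u^\complement\Big) \leqslant h(v). \]
The base case $S=\emptyset$ is immediate from \eqref{eq2.6}, since each factor $1-h(u)$ lies in $(0,1]$ and hence $\mathbb{P}(A_v)\leqslant h(v)$.

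For the inductive step I would partition $S=S_1\cup S_2$ with $S_1:=S\cap N_G(v)$ and $S_2:=S\setminus N_G(v)$. If $S_1=\emptyset$, then $A_v$ is independent of the algebra generated by $\{A_u:u\in S_2\}$ (note $v\notin S_2$), so the conditional probability equals $\mathbb{P}(A_v)\leqslant h(v)$. Otherwise, write $B=\bigcap_{u\in S_1}A_u^\complement$ and $C=\bigcap_{u\in S_2}A_u^\complement$, and use $\mathbb{P}(A_v\mid B\cap C)=\mathbb{P}(A_v\cap B\mid C)/\mathbb{P}(B\mid C)$. For the numerator, since $A_v\cap B\subseteq A_v$ and $A_v$ is independent of $\{A_u:u\in S_2\}$,
\[ \mathbb{P}(A_v\cap B\mid C)\leqslant \mathbb{P}(A_v\mid C)=\mathbb{P}(A_v)\leqslant h(v)\prod_{u\in N_G(v)}\big(1-h(u)\big). \]
For the denominator, enumerate $S_1=\{w_1,\dots,w_k\}$ and expand
\[ \mathbb{P}(B\mid C)=\prod_{i=1}^{k}\mathbb{P}\Big(A_{w_i}^\complement\,\Big|\,\bigcap_{j<i}A_{w_j}^\complement\cap C\Big); \]
each conditioning event here contains $\bigcap_{u\in S}A_u^\complement$ and so has positive probability, it excludes the index $w_i$, and it has cardinality $(i-1)+|S_2|\leqslant (k-1)+|S_2|<|S|$, so the inductive hypothesis applies and yields $\mathbb{P}(A_{w_i}^\complement\mid\cdots)\geqslant 1-h(w_i)$. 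Hence $\mathbb{P}(B\mid C)\geqslant\prod_{u\in S_1}\big(1-h(u)\big)\geqslant\prod_{u\in N_G(v)}\big(1-h(u)\big)$, the last inequality because $S_1\subseteq N_G(v)$ and each factor is at most $1$. Dividing, the product $\prod_{u\in N_G(v)}\big(1-h(u)\big)$ cancels and we obtain $\mathbb{P}(A_v\mid B\cap C)\leqslant h(v)$, completing the induction.

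Finally, fixing any enumeration $v_1,\dots,v_n$ of $V(G)$ I would telescope:
\[ \mathbb{P}\Big(\bigcap_{i=1}^{n}A_{v_i}^\complement\Big)=\prod_{i=1}^{n}\mathbb{P}\Big(A_{v_i}^\complement\,\Big|\,\bigcap_{j<i}A_{v_j}^\complement\Big)\geqslant\prod_{i=1}^{n}\big(1-h(v_i)\big), \]
where at the $i$-th step one applies the claim with $S=\{v_1,\dots,v_{i-1}\}$; the bound itself guarantees inductively that $\mathbb{P}\big(\bigcap_{j\leqslant i}A_{v_j}^\complement\big)\geqslant\prod_{j\leqslant i}\big(1-h(v_j)\big)>0$, so every conditional probability appearing in the telescoping is well defined. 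I expect the only real care needed to be the bookkeeping in the inductive step — verifying that the conditioning sets shrink strictly in size and exclude the relevant index, and tracking positivity of all conditioning events so that no conditional probability is undefined. There is no analytic difficulty: the hypothesis that $A_v$ is independent of its non-neighbours, together with the product form of \eqref{eq2.6}, is exactly what produces the numerator/denominator cancellation.
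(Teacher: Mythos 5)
Your proof is correct. The paper does not prove this lemma at all --- it is quoted as a known result with a pointer to \cite{AS16} --- and your argument is precisely the standard inductive proof from that source: the strengthened claim $\mathbb{P}\big(A_v \mid \bigcap_{u\in S}A_u^{\complement}\big)\leqslant h(v)$, the split of $S$ into neighbours and non-neighbours of $v$, the numerator/denominator cancellation, and the final telescoping, with the positivity of all conditioning events correctly tracked.
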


\section{Preparatory lemmas} \label{sec3}

In this section we shall present three preparatory lemmas that are used in the proof of Theorem~\ref{thm:complete-tree-embedding}. We start with the following volumetric estimate.

\begin{lemma} \label{lem:ball_volume}
There exists an absolute constant $C_B>0$ such that for any $m$-dimensional normed space $X=(\mathbb{R}^m,\|\cdot\|_X)$ in isotropic position, we have $\lambda(B_X)\leqslant C_B^m$, where $\lambda$ denotes the Lebesgue measure in $\mathbb{R}^m$.
\end{lemma}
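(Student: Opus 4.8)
The plan is to read the statement through the probabilistic picture set up in Section~\ref{sec2}: the normalized indicator $f=\lambda(B_X)^{-1}\mathbf 1_{B_X}$ is precisely the density of the random vector $\mathbf X$ uniformly distributed in $B_X$, and by hypothesis ``$X$ is in isotropic position'' this $\mathbf X$ is isotropic. The point is that the desired upper bound on $\lambda(B_X)$ follows from the second--moment constraint alone, with no appeal to Theorem~\ref{thm:slicing}; the slicing theorem would instead be what one needs for the reverse inequality $\lambda(B_X)\geqslant c^m$.

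Concretely, I would first record that, since $\mathbf X$ has mean zero and covariance the identity, $\mathbb E\,\|\mathbf X\|_2^2=\operatorname{tr}\big(\operatorname{Cov}(\mathbf X)\big)=m$. Markov's inequality then gives $\mathbb P\big(\|\mathbf X\|_2^2\geqslant 2m\big)\leqslant \tfrac12$, i.e.\ at least half of the mass of $\mathbf X$ lies in the Euclidean ball $\sqrt{2m}\,B_2^m$. Rewriting this in terms of Lebesgue measure, $\lambda\big(B_X\cap \sqrt{2m}\,B_2^m\big)\geqslant \tfrac12\lambda(B_X)$, and in particular
\[
\lambda(B_X)\;\leqslant\;2\,\lambda\big(\sqrt{2m}\,B_2^m\big)\;=\;2\,(2m)^{m/2}\,\lambda(B_2^m).
\]
Finally I would insert the standard asymptotics for the volume of the Euclidean ball, $\lambda(B_2^m)=\pi^{m/2}/\Gamma\big(\tfrac m2+1\big)\leqslant (C_0/\sqrt m)^{m}$ for an absolute constant $C_0$ (Stirling), to obtain $\lambda(B_X)\leqslant 2\,(2m)^{m/2}(C_0/\sqrt m)^{m}=2\,(C_0\sqrt2)^{m}\leqslant C_B^{\,m}$, where the leading factor $2$ is absorbed into the exponential since $m\geqslant 1$.

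The argument is essentially without obstacle: the only ingredients are the trace identity coming from isotropy, Markov's inequality, and the elementary estimate for $\lambda(B_2^m)$. I do not expect to use either the convexity or the central symmetry of $B_X$, nor the log-concavity machinery of Section~\ref{sec2}; the mild care needed is only in tracking that the various absolute constants ($2$, $(2m)^{m/2}$, $C_0$) combine into a single constant $C_B$ independent of $m$.
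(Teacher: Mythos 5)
Your proof is correct, and it takes a genuinely more elementary route than the paper. The paper's proof also reduces to showing that at least half the mass of the uniform distribution on $B_X$ sits inside a Euclidean ball of radius $c\sqrt{m}$, but it obtains this from Klartag's thin-shell estimate (Theorem~\ref{thm:thinshell}) applied to the isotropic log-concave vector $\mathbf{U}$ uniform on $B_X$; you instead get it from the trace identity $\mathbb{E}\|\mathbf{X}\|_2^2=m$ and Markov's inequality, which requires nothing beyond isotropy --- as you correctly note, neither convexity nor symmetry of $B_X$ is used. From there both arguments conclude identically: compare $\lambda(B_X)$ with twice the volume of the Euclidean ball of radius $O(\sqrt{m})$ and invoke the Stirling bound $\lambda(B_2^m)\leqslant (C_0/\sqrt{m})^m$ (the paper writes the resulting constant as $\sqrt{2c^2\pi e}$). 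Your version buys independence from the deep thin-shell machinery for this particular lemma, at no cost in the final constant; the paper presumably reaches for Theorem~\ref{thm:thinshell} simply because it is already needed elsewhere (Lemma~\ref{lem:prob_bound_2}). Your side remark that Theorem~\ref{thm:slicing} gives the complementary lower bound $\lambda(B_X)\geqslant L^{-m}$ is also accurate.
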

\begin{proof}
Let $\mathbf{U}$ be a random vector uniformly distributed in $B_X$. Then $\mathbf{U}$ is an isotropic, log-concave, random vector in $\mathbb{R}^m$ with density function $f := \lambda(B_X)^{-1}\mathbbm{1}_{B_X}$. By Theorem \ref{thm:thinshell}, there exists an absolute constant $c>0$ such that
\[ \frac{1}{2}\leqslant \mathbb{P}\big( \|\mathbf{U}\|_2\leqslant c\sqrt{m}\big) =
\frac{\lambda\big(B_X\cap c\sqrt{m}\,B_{\ell^m_2}\big)}{\lambda(B_X)}. \]
Therefore,
\[ \lambda(B_X) \leqslant 2\lambda\big(B_X\cap c\sqrt{m}\, B_{\ell^m_2}\big) \leqslant
2\, c^m\, \lambda\big(\sqrt{m}\, B_{\ell^m_2}\big) \leqslant 2 \big(\sqrt{2c^2\pi e}\big)^m. \qedhere \]
\end{proof}

The next lemma is, in effect, a small-ball probability estimate with the ball of a given finite-dimensional normed space $X$ in place of the usual Euclidean ball.

\begin{lemma} \label{lem:prob_bound_1}
Let $X=(\mathbb{R}^m,\|\cdot\|_X)$ be a normed space in isotropic position, let $k$ be a positive integer, and let $0<c_1<\frac{1}{2}$ such that
\begin{equation} \label{eq:004}
k\geqslant \big(C_B\,L\big)^{\frac{4}{1-2c_1}},
\end{equation}
where $C_B$ is as in Lemma \ref{lem:ball_volume}, and $L$ is as in Theorem \ref{thm:slicing}. Finally, let $(\mathbf{Y}_1,\dots,\mathbf{Y}_k)$ be i.i.d. random vectors uniformly distributed in $B_X$. Then,
\begin{equation} \label{eq3.1}
\mathbb{P} \Bigg( \Big\|  \sum_{i=1}^{k} \mathbf{Y}_i \Big\|_X \leqslant k^{c_1} \Bigg)
\leqslant \exp\Big(-\frac{1}{2} \Big(\frac{1}{2}-c_1\Big) m \log k \Bigg).
\end{equation}
\end{lemma}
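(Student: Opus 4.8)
The plan is to exploit that the random sum $\mathbf{S}:=\sum_{i=1}^{k}\mathbf{Y}_i$ has a log-concave density whose \emph{supremum} can be controlled, after rescaling to isotropic position, by the slicing bound of Theorem~\ref{thm:slicing}, and then to bound the desired probability by integrating this uniform density bound over the small ball $k^{c_1}B_X$, whose volume is controlled by Lemma~\ref{lem:ball_volume}. In other words, this is an anticoncentration estimate obtained by the crude inequality $\mathbb{P}(\mathbf{S}\in A)\leqslant \|h\|_\infty\,\lambda(A)$, where $h$ is the density of $\mathbf{S}$.

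First I would record the structural facts. Since each $\mathbf{Y}_i$ is uniform on the convex body $B_X$, its law has the log-concave density $\lambda(B_X)^{-1}\mathbbm{1}_{B_X}$; as a convolution of log-concave densities is log-concave (Pr\'ekopa--Leindler), $\mathbf{S}$ has a log-concave density $h$ (absolute continuity already holds for $k=1$). Because $X$ is in isotropic position, each $\mathbf{Y}_i$ is mean-zero with covariance $\mathrm{Id}$, so $\mathbf{S}$ is mean-zero with covariance $k\,\mathrm{Id}$, and hence $\mathbf{S}/\sqrt{k}$ is an isotropic log-concave random vector in $\mathbb{R}^m$. Applying Theorem~\ref{thm:slicing} to $\mathbf{S}/\sqrt{k}$ bounds its density by $L^m$ everywhere, and rescaling yields $h(y)\leqslant k^{-m/2}L^m$ for all $y\in\mathbb{R}^m$.

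Next I would estimate directly. The event $\{\|\mathbf{S}\|_X\leqslant k^{c_1}\}$ is exactly $\{\mathbf{S}\in k^{c_1}B_X\}$, so
\[
\mathbb{P}\big(\|\mathbf{S}\|_X\leqslant k^{c_1}\big)=\int_{k^{c_1}B_X} h(y)\,dy\leqslant k^{-m/2}L^m\,\lambda\big(k^{c_1}B_X\big)=k^{-m/2}L^m k^{c_1 m}\,\lambda(B_X).
\]
By Lemma~\ref{lem:ball_volume}, $\lambda(B_X)\leqslant C_B^m$, so the right-hand side is at most $\big(C_B L\,k^{c_1-1/2}\big)^m$. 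Finally I would feed in the hypothesis~\eqref{eq:004}: since $\tfrac{4}{1-2c_1}=\tfrac{2}{1/2-c_1}$, condition~\eqref{eq:004} is equivalent to $k^{\frac12(\frac12-c_1)}\geqslant C_B L$, whence $C_B L\,k^{c_1-1/2}\leqslant k^{-\frac12(\frac12-c_1)}$, and therefore $\mathbb{P}\big(\|\mathbf{S}\|_X\leqslant k^{c_1}\big)\leqslant k^{-\frac12(\frac12-c_1)m}=\exp\!\big(-\tfrac12(\tfrac12-c_1)m\log k\big)$, which is \eqref{eq3.1}.

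I do not expect a genuine obstacle here: every step is an invocation of a result quoted above (the slicing theorem, the volume estimate) or an elementary manipulation. The only points requiring routine care are verifying that $\mathbf{S}$ is absolutely continuous and log-concave, and keeping track of the exponents so that the numerical threshold~\eqref{eq:004} is precisely what makes the bookkeeping close; the real content of the lemma lies entirely in the deep inputs of Section~\ref{sec2}.
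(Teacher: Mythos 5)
Your proof is correct and follows essentially the same route as the paper: both bound the probability by the supremum of the (log-concave, isotropic after rescaling by $\sqrt{k}$) density via Theorem~\ref{thm:slicing}, multiply by the volume of the small ball via Lemma~\ref{lem:ball_volume}, and close the bookkeeping using hypothesis~\eqref{eq:004}. The only cosmetic difference is that you rescale the density of $\mathbf{S}$ while the paper rescales the event; the computations are identical.
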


\begin{proof}
Let $g$ denote the density function of the random vector $\mathbf{Y}=\frac{1}{\sqrt{k}}\sum_{i=1}^{k}\mathbf{Y}_i$. Since the convolution of two log-concave distributions is log-concave---see, e.g., \cite[Section 7.2]{Ga02}---we see that the random vector $\mathbf{Y}$ is log-concave; it is also clear that $\mathbf{Y}$ is isotropic. Consequently, by Theorem~\ref{thm:slicing}, the function $g$ is pointwise bounded by~$L^m$. Hence, by Lemma \ref{lem:ball_volume}, we have
\begin{align} \label{eq3.2}
\mathbb{P} \Bigg( \Big\|  \sum_{i=1}^{k} \mathbf{Y}_i \Big\|_X \leqslant k^{c_1} \Bigg) & =
\mathbb{P} \Bigg( \Big\| \frac{1}{\sqrt{k}} \sum_{i=1}^{k} \mathbf{Y}_i \Big\|_X \leqslant k^{c_1-\frac{1}{2}} \Bigg) \\
& = \int g(x) \mathbbm{1}_{k^{c_1-\frac{1}{2}}B_X}(x)\, dx
\leqslant L^m\, \lambda\big(k^{c_1-\frac{1}{2}} B_X\big) \nonumber \\
& \leqslant \Big(\frac{C_B L}{k^{\frac{1}{2}-c_1}}\Big)^m \leqslant
\exp\Big(-\frac{1}{2} \Big(\frac{1}{2}-c_1\Big)m \log k \Big). \nonumber \qedhere
\end{align}
\end{proof}

The next lemma complements Lemma \ref{lem:prob_bound_1}, and it will used when $k$ is smaller than the threshold appearing in the right-hand-side of \eqref{eq:004}.

\begin{lemma} \label{lem:prob_bound_2}
There exists a positive integer $m_1$ with the following property. Let $m\geqslant m_1$ be an integer, let $X=(\mathbb{R}^m,\|\cdot\|_X)$ be a normed space in isotropic position, let $k\geqslant 2$ be an integer, and let $(\mathbf{Y}_1,\dots,\mathbf{Y}_k)$ be i.i.d. random vectors uniformly distributed in $B_X$. Then,
\begin{equation} \label{eq3.4}
\mathbb{P} \Bigg( \Big\|  \sum_{i=1}^{k} \mathbf{Y}_i \Big\|_X \leqslant 1+\frac{1}{2\,m^{1-\kappa}} \Bigg)
\leqslant \exp\big(-m^{\frac{\kappa}{2}}\big),
\end{equation}
where $\kappa$ is as in Theorem \ref{thm:thinshell}.
\end{lemma}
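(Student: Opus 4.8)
The plan is to decompose the event $\big\{\|\mathbf{S}\|_X \leqslant 1+\tfrac{1}{2m^{1-\kappa}}\big\}$, where $\mathbf{S} := \mathbf{Y}_1+\dots+\mathbf{Y}_k$, according to the Euclidean norm of $\mathbf{S}$, using the cut-off $r := \sqrt{km}\,(1-m^{-\kappa})$. Since the $\mathbf{Y}_i$ are i.i.d.\ and isotropic log-concave, the vector $\tfrac{1}{\sqrt k}\mathbf{S}$ is isotropic and log-concave, so Theorem~\ref{thm:thinshell} applied to $\tfrac{1}{\sqrt k}\mathbf{S}$ immediately gives $\mathbb{P}(\|\mathbf{S}\|_2<r)\leqslant C\exp(-m^\kappa)$. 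Thus it remains to bound $\mathbb{P}(\mathbf{S}\in A)$, where $A:=\big(1+\tfrac{1}{2m^{1-\kappa}}\big)B_X\cap\{y\colon\|y\|_2\geqslant r\}$, because $\{\|\mathbf{S}\|_X\leqslant 1+\tfrac{1}{2m^{1-\kappa}}\}\cap\{\|\mathbf{S}\|_2\geqslant r\}\subseteq\{\mathbf{S}\in A\}$.

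To estimate $\mathbb{P}(\mathbf{S}\in A)\leqslant\|f_{\mathbf S}\|_\infty\,\lambda(A)$ (with $f_{\mathbf S}$ the density of $\mathbf{S}$) I will bound each factor so that $\lambda(B_X)$ does not survive in the product. For the density: $f_{\mathbf S}$ is the convolution of $f_{\mathbf{Y}_1}=\lambda(B_X)^{-1}\mathbbm{1}_{B_X}$ with the probability density of $\mathbf{Y}_2+\dots+\mathbf{Y}_k$, so from $\|f*g\|_\infty\leqslant\|f\|_\infty\|g\|_1$ we get $\|f_{\mathbf S}\|_\infty\leqslant\lambda(B_X)^{-1}$. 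For the volume: since $A=\big(1+\tfrac{1}{2m^{1-\kappa}}\big)\big(B_X\cap\{\|y\|_2\geqslant\rho\}\big)$ with $\rho:=r/(1+\tfrac{1}{2m^{1-\kappa}})$, we have $\lambda(A)=\big(1+\tfrac{1}{2m^{1-\kappa}}\big)^m\lambda\big(B_X\cap\{\|y\|_2\geqslant\rho\}\big)$. Because $k\geqslant 2$, one has $\rho\geqslant\sqrt m+m^{1/2-\kappa}$ for every $m$ larger than an absolute constant (indeed $\rho/\sqrt m\to\sqrt k\geqslant\sqrt 2>1$ while $(\sqrt m+m^{1/2-\kappa})/\sqrt m\to 1$); hence, applying Theorem~\ref{thm:thinshell} to $\mathbf{Y}_1$ (which is isotropic log-concave and uniform on $B_X$), $\lambda\big(B_X\cap\{\|y\|_2\geqslant\rho\}\big)=\lambda(B_X)\,\mathbb{P}(\|\mathbf{Y}_1\|_2\geqslant\rho)\leqslant C\exp(-m^\kappa)\,\lambda(B_X)$.

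Multiplying the two estimates, the factors $\lambda(B_X)$ cancel, and using $\big(1+\tfrac{1}{2m^{1-\kappa}}\big)^m\leqslant\exp(\tfrac12 m^\kappa)$ we obtain
\[
\mathbb{P}(\mathbf{S}\in A)\;\leqslant\;\Big(1+\tfrac{1}{2m^{1-\kappa}}\Big)^m C\exp(-m^\kappa)\;\leqslant\;C\exp\!\big(-\tfrac12 m^\kappa\big).
\]
Adding the bound $\mathbb{P}(\|\mathbf{S}\|_2<r)\leqslant C\exp(-m^\kappa)$ yields $\mathbb{P}(\|\mathbf{S}\|_X\leqslant 1+\tfrac{1}{2m^{1-\kappa}})\leqslant 2C\exp(-\tfrac12 m^\kappa)$, and since $\tfrac12 m^\kappa-m^{\kappa/2}\to\infty$ this is at most $\exp(-m^{\kappa/2})$ once $m\geqslant m_1$. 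The constant $m_1$ is chosen (depending only on the absolute constants $C,\kappa$ of Theorem~\ref{thm:thinshell}) so that both $\rho\geqslant\sqrt m+m^{1/2-\kappa}$ and $2C\exp(-\tfrac12 m^\kappa)\leqslant\exp(-m^{\kappa/2})$ hold; neither requirement involves $k$ or $X$.

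The main obstacle, and the point where a naive approach fails, is the density/volume estimate of the second paragraph. Copying the proof of Lemma~\ref{lem:prob_bound_1} — i.e.\ bounding $\|f_{\mathbf S}\|_\infty$ by the slicing bound $k^{-m/2}L^m$ of Theorem~\ref{thm:slicing} and $\lambda(A)\leqslant\lambda\big((1+\tfrac{1}{2m^{1-\kappa}})B_X\big)\leqslant(1+o(1))^m C_B^m$ via Lemma~\ref{lem:ball_volume} — produces a bound of order $\big(L\,C_B/\sqrt k\,\big)^m$, which is of no use for the small values of $k$ this lemma is intended for (in particular $k=2$). The two ideas that rescue the argument are: (i) exploiting the convolution structure to replace $k^{-m/2}L^m$ by the much smaller $\lambda(B_X)^{-1}$; and (ii) discarding the bulk of the dilated ball and retaining only its \emph{far part} $\{\|y\|_2\geqslant\rho\}$, whose measure relative to $\lambda(B_X)$ is exponentially small by thin-shell concentration. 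The two occurrences of $\lambda(B_X)$ then cancel exactly, leaving only the harmless loss $\exp(\tfrac12 m^\kappa)$ from the $(1+\tfrac{1}{2m^{1-\kappa}})$-dilation, which is absorbed by the thin-shell gain $\exp(-m^\kappa)$.
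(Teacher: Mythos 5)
Your proof is correct and follows essentially the same route as the paper's: split according to whether $\|\mathbf{S}\|_2$ falls below a threshold of order $\sqrt{km}$, handle the small-norm part by thin-shell applied to $\tfrac{1}{\sqrt k}\mathbf{S}$, and handle the remainder by bounding the density of $\mathbf{S}$ by $\lambda(B_X)^{-1}$ (the paper phrases this as conditioning on $\mathbf{Y}_1,\dots,\mathbf{Y}_{k-1}$, which is equivalent to your convolution bound) and showing the outer shell of the dilated ball has relative volume $\leqslant C\exp(-m^{\kappa})$ by thin-shell applied to a single uniform vector. The only cosmetic difference is your choice of cut-off $\sqrt{km}\,(1-m^{-\kappa})$ versus the paper's constant factor $(1-\gamma)\sqrt{km}$ with $\gamma=\tfrac{2-\sqrt2}{4}$.
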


\begin{proof}
Set $\gamma := \frac{2-\sqrt2}{4}$, $\varepsilon:=\frac{1}{2\,m^{1-\kappa}}$, and
$\Gamma:= B_X\setminus \Big((1+\varepsilon)^{-1}(1-\gamma)\sqrt{k}\sqrt{m}\, B_{\ell_2^m}\Big)$. Notice that
\begin{equation}\label{eq:005}
\mathbb{P}\Bigg( \Big\| \sum_{i=1}^{k} \mathbf{Y}_i \Big\|_X \leqslant 1+\varepsilon \Bigg) \leqslant
\mathbb{P}\Big( \sum_{i=1}^{k} \mathbf{Y}_i \in (1-\gamma)\sqrt{k}\sqrt{m}\,B_{\ell^m_2} \Big) +
\mathbb{P}\Big(\sum_{i=1}^{k} \mathbf{Y}_i \in (1+\varepsilon)\Gamma\Big).
\end{equation}
As in the proof of Lemma \ref{lem:prob_bound_1}, we observe that the random vector $\frac{1}{\sqrt{k}} \sum_{i=1}^{k} \mathbf{Y}_i $ is isotropic and log-concave. Therefore, by Theorem \ref{thm:thinshell}, if $m_1$ is sufficiently large, we have
\begin{equation} \label{eq:006}
\mathbb{P}\Bigg(\sum_{i=1}^{k} \mathbf{Y}_i \in (1-\gamma)\sqrt{k}\sqrt{m}\,B_{\ell_2^m} \Bigg) =
\mathbb{P}\Bigg(\Big\|\frac{1}{\sqrt{k}}\sum_{i=1}^{k} \mathbf{Y}_i \Big\|_2\leqslant (1-\gamma)\sqrt{m} \Bigg)
\leqslant \frac{1}{2}\exp\big(-m^{\frac{\kappa}{2}}\big).
\end{equation}
Notice that
$(1-\gamma)\sqrt{2}=\frac{\sqrt{2}+1}{2}>1$.
Hence, by Theorem \ref{thm:thinshell} and assuming that $m_1$ is sufficiently large, we get that
$$ \lambda(\Gamma) \leqslant C\exp\big(-m^{\kappa}\big)\,\lambda(B_X). $$
It follows immediately that, conditioned on any realization of $\mathbf{Y}_1,\dots,\mathbf{Y}_{k-1}$, the probability that $\sum_{i=1}^{k} \mathbf{Y}_i$ belongs to $(1+\varepsilon)\Gamma$, is bounded above by
$$ \frac{\lambda\big((1+\varepsilon)\Gamma\big)}{\lambda(B_X)} \leqslant
C(1+\varepsilon)^m\exp\big(-m^{\kappa}\big) \leqslant \frac{1}{2}\exp\big(-m^{\frac{\kappa}{2}}\big), $$
where we have used again the fact that $m_1$ can be taken sufficiently large. Combining the last estimate with \eqref{eq:005}--\eqref{eq:006}, we get the result.
\end{proof}

\section{Proof of Theorem \ref{thm:complete-tree-embedding}} \label{sec4}

As mentioned, part $(A)$ of Theorem \ref{thm:complete-tree-embedding} is due to Frankl and Maehara \cite{FM86}, and it is essentially a volumetric argument: small dimension does not have enough volume to fit in all the norm balls needed. For the convenience of the reader, we recall the proof. We set
\begin{equation} \label{pa.eq.1}
\alpha_0:= \frac12.
\end{equation}
Fix an integer $\Delta\geqslant 3$, and let $N$ be an arbitrary positive integer that is sufficiently large in terms of $\Delta$. We define a tree $T$ on $N$ vertices with maximum degree $\Delta$ as follows. There exists a root $r_T\in V(T)$ such that, setting $h_0:=\big\lceil \log\big((N-1)\frac{\Delta-2}{\Delta}+1\big)/\log(\Delta-1)\big\rceil$, the following hold.
\begin{enumerate}
\item[$\bullet$] For every $v\in V(T)$, we have $\mathrm{dist}_T(r_T,v)\leqslant h_0$.
\item[$\bullet$] For every $v\in V(T)$ with $\mathrm{dist}_T(r_T,v)\leqslant h_0-2$, we have $|N_T(v)|=\Delta$.
\end{enumerate}
Notice, in particular, that for $n \geqslant 1$ and $N = 1 + \Delta \frac{(\Delta-1)^{n}-1}{\Delta-2}$, then $T$ is just the $(\Delta-1)$-regular tree of height $n$. Let $X=(\mathbb{R}^m,\|\cdot\|_X)$ be a normed space for which there exists a geometric embedding $\zeta\colon V(T)\to X$. Since $T$ is a bipartite graph (being a tree), there exists an independent set $V_0\subseteq V(T)$ with $|V_0|\geqslant\frac{N}{2}$. Using the fact that every node of $T$ has distance from $r_T$ at most $h_0$, we see that
\begin{equation}\label{eq:038}
\bigcup_{v\in V_0}B_X\big(\zeta(v),1/2\big) \subseteq B_X\big(\zeta(r_T), h_0+1/2\big).
\end{equation}
On the other hand, since $\zeta$ is a geometric embedding, for every $u,v\in V_0$ with $u\neq v$ we have that $B_X\big(\zeta(u),1/2\big)\cap B_X\big(\zeta(v),1/2\big)=\emptyset$. Thus, denoting by $\lambda$ the Lebesgue measure in $\mathbb{R}^m$, by \eqref{eq:038},
\[ \frac{N}{2}\lambda\big(B_X(0,1/2)\big) \leqslant \lambda\big(B_X(0,h_0+1/2)\big), \]
which is easily seen to imply, if $N$ is sufficiently large, that $\dim(X)=m > \alpha_0 \frac{\log N}{\log\log N}$, as desired.

We proceed to the proof of part $(B)$. We shall construct a random embedding of a given tree $T$ on $N$ vertices
with maximum degree $\Delta$ in the normed space $X=(\mathbb{R}^m,\|\cdot\|_X)$, and we shall show that with positive probability
this is a geometric embedding. To that end we start by setting
\begin{equation} \label{eq:001}
\alpha_1:=64, \ \ \ \ c_1:=\frac{1}{4}, \ \ \ \ c_2:=\min\Big\{\frac{1}{4},\frac{\kappa}{4}\Big\},
\end{equation}
where $\kappa$ is as in Theorem \ref{thm:thinshell}. The parameter $N_0$ will be determined in the course
of the proof. For notational convenience, we shall denote by $V$ the vertex set of $T$, and by $E$ its edge set.
Clearly, we may assume that $\dim(X)=m= \big\lceil \alpha_1 \log N/\log\log N \big\rceil$ and that $X$ is in isotropic position.

\subsection{Construction of the random embedding} \label{subsection_Constr_Rand_Emb}

Let $(\mathbf{Y}_e)_{e\in E}$ be a collection of i.i.d. random vectors uniformly distributed in $B_X$.
Also let $(\mathbf{g}_i^e)_{i\in [m], e\in E}$ be a collection of i.i.d. scalar standard normal random variables that are
independent of $(\mathbf{Y}_e)_{e\in E}$. (As usual, $[m]:=\{1,\dots,m\}$ denotes the discrete interval of length $m$.)
Let $x_1,\ldots,x_m\in \mathbb{R}^m$ be as in Fact \ref{lem:John_position}, and define for every~$e\in E$,
\begin{equation} \label{eq:G-def}
    \mathbf{G}_e := \sum_{i=1}^{m}\mathbf{g}_i^e\, x_i.
\end{equation}
Fix a distinguished vertex $r_T\in V$ that we view as the root of $T$, and set
\begin{equation} \label{eq:003}
\delta := \exp\Big( -\Big(\frac{1}{2}-\frac{c_1}{2}\Big) \log^{c_2}N \Big).
\end{equation}
We define the random embedding $\boldsymbol{\zeta}\colon V\to X$ by
\[ \boldsymbol{\zeta}(v) :=
\Big( 1-\frac{1}{\log N} \Big) \sum_{e\in P(r_T, v)} \mathbf{Y}_e +\delta \sum_{e\in P(r_T, v)} \mathbf{G}_e  \]
with the convention that the sum over the empty set is equal to the zero vector. (Here, $P(r_T,v)$ is as in Definition \ref{def2.1}.) As we shall see the parameter $\delta$---and the constant $c_2$---are chosen to moderate the contribution of the Gaussian part of the random embedding $\boldsymbol{\zeta}$ in a certain regime.

Next, for every $u,v\in V$ with $u\neq v$, define the ``good'' event
\[ \mathcal{A}_{u,v}:=
\begin{cases}
\big[ \|\boldsymbol{\zeta}(u)-\boldsymbol{\zeta}(v) \|_X \leqslant 1 \big] & \text{if } \, \{u,v\}\in E, \\
\big[ \| \boldsymbol{\zeta}(u)-\boldsymbol{\zeta}(v) \|_X > 1 \big] & \text{if } \, \{u,v\}\not\in E.
\end{cases} \]
Part $(B)$ of Theorem \ref{thm:complete-tree-embedding} will follow once we show that
\begin{equation} \label{eq:008}
\mathbb{P}\Bigg( \bigcap_{\substack{u,v\in V \\ u\neq v}} \mathcal{A}_{u,v} \Bigg)>0.
\end{equation}

\subsection{Auxiliary events}

Set
\begin{equation} \label{eq:009}
\ell_0 := \exp\big( \log^{c_2}N\big) \ \ \ \text{ and } \ \ \
k_0:= \max\Big\{ 3,\big\lceil (C_BL)^{4/(1-2c_1)}\big\rceil, \big\lceil 4^{1/c_1}\big\rceil \Big\},
\end{equation}
where $C_B$ is as in Lemma \ref{lem:ball_volume}, and $L$ is as in Theorem \ref{thm:slicing}.
Taking $N_0$ sufficiently large, we may assume that $k_0\leqslant \ell_0$.

Next, for every $u,v\in V$ with $2\leqslant \mathrm{dist}_T(u,v) \leqslant \ell_0$, define the event
\[ \mathcal{L}_{u,v}:=
\begin{cases}
\Big[ \big\| \underset{e\in P(r_T,u)}{\sum} \mathbf{Y}_e - \underset{e\in P(r_T,v)}{\sum} \mathbf{Y}_e \big\|_X
\leqslant 1+\frac{1}{2\, m^{1-\kappa}} \Big] & \text{if } \, 2\leqslant \mathrm{dist}_T(u,v) \leqslant k_0, \\
\Big[ \big\| \underset{e\in P(r_T,u)}{\sum} \mathbf{Y}_e - \underset{e\in P(r_T,v)}{\sum} \mathbf{Y}_e\big\|_X
\leqslant \mathrm{dist}_T(u,v)^{c_1} \Big] & \text{if } \, k_0<\mathrm{dist}_T(u,v) \leqslant \ell_0,
\end{cases} \]
where $\kappa$ is as in Theorem \ref{thm:thinshell}. Our goal in this subsection is to show that
\begin{equation} \label{eq:010}
\mathbb{P}\Bigg( \bigcap_{\substack{ u,v\in V \\ 2\leqslant\mathrm{dist}_T(u,v)\leqslant\ell_0 }}
\mathcal{L}_{u,v}^{\complement}\Bigg)>0.
\end{equation}

\subsubsection{The ``path'' graph}

Define the ``path" graph $\mathcal{G} = (\mathcal{V},\mathcal{E})$, which will act as the dependency graph in our application of the Lov\'{a}sz local lemma, by setting
\[ \mathcal{V}:=\big\{\{u,v\}\colon u,v\in V \text{ and } 2\leqslant\mathrm{dist}_T(u,v)\leqslant\ell_0\big\}, \]
\[ \mathcal{E}:=\Big\{ \big\{\{u,v\},\{u',v'\}\big\}\colon \{u,v\},\{u',v'\}\in \mathcal{V} \text{ and }
P(u,v)\cap P(u',v') \neq \emptyset\Big\}. \]
Moreover, for every $k\in \{2,\dots, \ell_0\}$ set
$\mathcal{V}_k:= \big\{ \{u,v\}\in\mathcal{V}\colon \mathrm{dist}_T(u,v)=k\big\}$.
\begin{claim} \label{cl:control_degree}
Let $k,\ell\in \{2,\dots,\ell_0\}$, and let $\{u,v\}\in\mathcal{V_\ell}$. Then,
\[ |N_\mathcal{G}(\{u,v\})\cap  \mathcal{V}_k|\leqslant \min\big\{N^2, 5\ell k (\Delta-1)^{k-1}\big\}. \]
\end{claim}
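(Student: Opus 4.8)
The plan is to establish two independent upper bounds and take the minimum. The bound $|N_{\mathcal G}(\{u,v\})\cap\mathcal V_k|\leqslant N^2$ is immediate from $|\mathcal V_k|\leqslant\binom{N}{2}<N^2$, so the work is in the second bound. Here I would use that $\{u',v'\}\in\mathcal V_k$ is a neighbour of $\{u,v\}$ in $\mathcal G$ precisely when the edge sets $P(u',v')$ and $P(u,v)$ intersect, i.e.\ when $P(u',v')$ contains at least one of the $\ell=|P(u,v)|$ edges of $P(u,v)$. Thus it is enough to bound, for each fixed edge $e\in P(u,v)$, the number of pairs $\{u',v'\}$ with $\mathrm{dist}_T(u',v')=k$ and $e\in P(u',v')$, and then to union over the $\ell$ choices of $e$.

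For the per-edge count, fix $e=\{a,b\}\in P(u,v)$ and let $T_a,T_b$ be the two components of $T$ after deleting the edge $e$, with $a\in T_a$ and $b\in T_b$. A pair $\{u',v'\}$ has $e\in P(u',v')$ if and only if, after possibly swapping $u'$ and $v'$, we have $u'\in V(T_a)$ and $v'\in V(T_b)$, and in that case $\mathrm{dist}_T(u',v')=\mathrm{dist}_T(u',a)+1+\mathrm{dist}_T(v',b)$. Setting $i:=\mathrm{dist}_T(u',a)$, the constraint $\mathrm{dist}_T(u',v')=k$ forces $\mathrm{dist}_T(v',b)=k-1-i$ with $0\leqslant i\leqslant k-1$. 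Rooting $T_a$ at $a$, every vertex has at most $\Delta-1$ children — the root $a$ because it lost the edge $e$, and any other vertex because it lost the edge to its parent — so the number of vertices of $T_a$ at distance exactly $i$ from $a$ is at most $(\Delta-1)^i$, and similarly for $T_b$ and $b$. Hence the number of ordered pairs $(u',v')$ of this form is at most $\sum_{i=0}^{k-1}(\Delta-1)^{i}(\Delta-1)^{k-1-i}=k(\Delta-1)^{k-1}$, which also bounds the number of unordered such pairs.

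Summing over the $\ell$ edges of $P(u,v)$ then gives $|N_{\mathcal G}(\{u,v\})\cap\mathcal V_k|\leqslant \ell\,k\,(\Delta-1)^{k-1}\leqslant 5\ell k(\Delta-1)^{k-1}$, and combining with the trivial bound completes the proof. I do not expect a genuine obstacle here: the argument is elementary counting. The two places I would be careful are the estimate $(\Delta-1)^i$ for the size of a distance sphere inside $T_a$ or $T_b$ (which relies on these being subtrees of maximum degree $\leqslant\Delta$ in which the distinguished endpoint has degree $\leqslant\Delta-1$), and the mild overcount incurred because a path $P(u',v')$ meeting $P(u,v)$ in several edges is counted several times in the union over $e$ — a loss the generous factor $5$ absorbs with room to spare.
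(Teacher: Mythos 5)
Your proof is correct, and in fact it yields the sharper bound $\ell k(\Delta-1)^{k-1}$, but it takes a genuinely different route from the paper. You union-bound over the $\ell$ edges $e$ of $P(u,v)$ and, for each fixed $e=\{a,b\}$, count paths of length $k$ through $e$ by splitting $T-e$ into the two components and using the level estimate $(\Delta-1)^i$ in each rooted half; the convolution $\sum_{i=0}^{k-1}(\Delta-1)^i(\Delta-1)^{k-1-i}=k(\Delta-1)^{k-1}$ then does all the work, and the multiple counting of paths sharing several edges with $P(u,v)$ is harmless. The paper instead partitions the neighbours $\{u',v'\}$ according to $s:=|P(u,v)\cap P(u',v')|$, the number of shared edges, and bounds each class $a_s$ separately by locating the two endpoints of the shared segment along $P(u,v)$ and branching off it; summing the cases $s=\ell$, $s=\ell-1$ and $s\leqslant\ell-2$ gives $\tfrac{17}{4}\ell k(\Delta-1)^{k-1}$, whence the constant $5$. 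Your argument is shorter, avoids the delicate case analysis and the constants hidden in it, and the only feature of the paper's decomposition that you lose --- the exact-intersection counts $a_s$ --- is never used elsewhere in the paper, so nothing is sacrificed. The two points you flag as needing care (the degree of the distinguished endpoint after deleting $e$, and the overcount absorbed by the union bound) are exactly the right ones, and both are handled correctly.
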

\begin{proof}
Clearly, $|N_\mathcal{G}(\{u,v\})\cap \mathcal{V}_k|$ is upper bounded by $N^2$. Thus, it suffices to show that
\begin{equation} \label{eq:011}
|N_\mathcal{G}(\{u,v\})\cap  \mathcal{V}_k|\leqslant 5\ell k (\Delta-1)^{k-1}.
\end{equation}
Let $s\in [k]$ be arbitrary, and set $a_s := \big|\big\{\{u',v'\}\in N_\mathcal{G}(\{u,v\})\cap  \mathcal{V}_k\colon |P(u,v)\cap P(u',v')|=s\big\}\big|$.

We shall consider various cases. First, assume that $s>\min\{\ell,k\}$; then, clearly,
\begin{equation}\label{eq:012}
a_s=0.
\end{equation}
Next, assume that $s=\ell\leqslant k$; then, since $\ell,k\geqslant 2$, we have
\begin{equation}\label{eq:013}
a_\ell \leqslant \sum_{x=0}^{k-\ell}(\Delta-1)^x(\Delta-1)^{k-\ell-x} = (\Delta-1)^{k-\ell}(k-\ell+1)
\leqslant \frac{\ell k}{4}(\Delta-1)^{k-1}.
\end{equation}
If $s = \ell-1 \leqslant k$, then
\begin{equation} \label{eq:014}
a_s \leqslant
2\big( (\Delta-1)^{k-\ell+1} + (\Delta-2)(\Delta-1)^{k-\ell}(k-\ell+1) \big)
\leqslant \ell k (\Delta-1)^{k-1}.
\end{equation}
Finally, if $s \leqslant \min\{\ell-2 , k\}$, then
\begin{align} \label{eq:015}
a_s & \leqslant 2 \big( (\Delta-1)^{k-s} +(\Delta -2)(\Delta-1)^{k-s-1}(k-s)\big) + \\
& \ \ \ \ \ \ \ \ \ \ \
+ (\ell-s-1) \big( 2(\Delta-2)(\Delta-1)^{k-s-1} +(\Delta-2)^2(\Delta-1)^{k-s-2}(k-s-1)  \big) \nonumber \\
& \leqslant 3\ell k(\Delta-2)(\Delta-1)^{k-s-1}. \nonumber
\end{align}
By \eqref{eq:012} and \eqref{eq:015}, we have
\begin{equation} \label{eq:016}
\sum_{s=1}^{\ell-2}a_s\leqslant 3\ell k(\Delta-1)^{k-1}.
\end{equation}
The desired estimate \eqref{eq:011} follows from \eqref{eq:012}--\eqref{eq:014} and \eqref{eq:016}.
\end{proof}

\subsubsection{Application of asymmetric Lovász local lemma}

Set
\[ k_1 := \Big\lfloor \frac{\log N }{\log(\Delta-1)} \Big\rfloor; \]
since $N_0$ can be taken sufficiently large, we may assume that for every $k\in\{2,\dots, k_1\}$ we have
\begin{equation}\label{eq:017}
6k\ell_0(\Delta-1)^k\leqslant N^2.
\end{equation}
For every $k\in \{2,\dots,\ell_0\}$ set
\[ b_k:=
\begin{cases}
\frac{1}{2^{k+1}6\ell_0 k (\Delta-1)^k}
 & \text{if } \, 2\leqslant k\leqslant k_1, \\
\frac{3}{2\pi^2 k^2 N^2}
& \text{if } \, k_1<k \leqslant \ell_0,
\end{cases} \]
and define $h\colon \mathcal{V}\to[0,1)$ by $h(\{u,v\}) = b_{\mathrm{dist}_T(u,v)}$.
By the choice of $(b_k)_{k=2}^{\ell_0}$ and \eqref{eq:017}, we see that
$\sum_{k=2}^{\ell_0} b_k\cdot\min\{N^2, 6\ell_0 k (\Delta-1)^k \}\leqslant \frac{1}{2}$.
Hence, by Claim \ref{cl:control_degree}, for every $\{u,v\}\in \mathcal{V}$,
\begin{equation}\label{eq:018}
\prod_{\{u',v'\}\in N_{\mathcal{G}}(\{u,v\})} \big(1-h(\{u',v'\})\big)\geqslant
\prod_{k=2}^{\ell_0} (1-b_k)^{\min\{N^2, 5\ell k (\Delta-1)^{k-1}\}} \geqslant \frac{1}{2}.
\end{equation}
\begin{claim}\label{claim_Lovatz_condition}
For every $\{u,v\}\in\mathcal{V}$, we have $\mathbb{P}(\mathcal{L}_{u,v})\leqslant\frac{1}{2}h(\{u,v\})$.
\end{claim}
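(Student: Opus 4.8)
Here is the plan. Fix $\{u,v\}\in\mathcal{V}$ and set $k:=\mathrm{dist}_T(u,v)\in\{2,\dots,\ell_0\}$, so that $h(\{u,v\})=b_k$. The first step is to identify the law of the random vector appearing in the definition of $\mathcal{L}_{u,v}$. Let $w\in V$ be the vertex with $P(r_T,u)\cap P(r_T,v)=P(r_T,w)$. Then $P(r_T,u)$ is the disjoint union of the edge sets $P(r_T,w)$ and $P(w,u)$, likewise $P(r_T,v)=P(r_T,w)\cup P(w,v)$ is a disjoint union, and $P(u,v)=P(w,u)\cup P(w,v)$ is a disjoint union consisting of exactly $k$ edges. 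Hence
\[
\sum_{e\in P(r_T,u)}\mathbf{Y}_e-\sum_{e\in P(r_T,v)}\mathbf{Y}_e=\sum_{e\in P(w,u)}\mathbf{Y}_e-\sum_{e\in P(w,v)}\mathbf{Y}_e,
\]
which is a $\pm1$-signed sum of $k$ distinct, hence i.i.d., vectors $\mathbf{Y}_e$, $e\in P(u,v)$. Since $B_X$ is centrally symmetric, $-\mathbf{Y}_e$ has the same distribution as $\mathbf{Y}_e$ and a deterministic choice of signs preserves joint independence; so the displayed vector has the same distribution as $\sum_{i=1}^{k}\mathbf{Y}_i$ with $\mathbf{Y}_1,\dots,\mathbf{Y}_k$ i.i.d.\ uniform in $B_X$. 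Thus $\mathbb{P}(\mathcal{L}_{u,v})=\mathbb{P}\big(\|\sum_{i=1}^k\mathbf{Y}_i\|_X\leqslant 1+\tfrac{1}{2m^{1-\kappa}}\big)$ when $2\leqslant k\leqslant k_0$, and $\mathbb{P}(\mathcal{L}_{u,v})=\mathbb{P}\big(\|\sum_{i=1}^k\mathbf{Y}_i\|_X\leqslant k^{c_1}\big)$ when $k_0<k\leqslant\ell_0$.

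Second, I would invoke the two preparatory lemmas. For $2\leqslant k\leqslant k_0$: taking $N_0$ large enough that $m=\lceil\alpha_1\log N/\log\log N\rceil\geqslant m_1$, Lemma~\ref{lem:prob_bound_2} gives $\mathbb{P}(\mathcal{L}_{u,v})\leqslant\exp(-m^{\kappa/2})$. For $k_0<k\leqslant\ell_0$: since $k>k_0\geqslant(C_BL)^{4/(1-2c_1)}$ and $c_1=\tfrac14<\tfrac12$, Lemma~\ref{lem:prob_bound_1} applies and yields $\mathbb{P}(\mathcal{L}_{u,v})\leqslant\exp\!\big(-\tfrac12(\tfrac12-c_1)m\log k\big)=k^{-m/8}$.

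It then remains to check that these bounds are at most $\tfrac12 b_k$ once $N_0=N_0(\Delta)$ is large; recall $m\geqslant 64\log N/\log\log N$, $\log\ell_0=\log^{c_2}N$ with $c_2\leqslant\kappa/4<\kappa/2$ and $c_2<1$, $k_1=\lfloor\log N/\log(\Delta-1)\rfloor$, and that $k_0$ is an absolute constant, so $k_0\leqslant k_1$ for $N_0$ large. I would split into three regimes. Regime (i), $2\leqslant k\leqslant k_0$: here $b_k=(2^{k+1}\cdot 6\ell_0 k(\Delta-1)^k)^{-1}$ and $k$ is bounded, so the inequality reduces to $m^{\kappa/2}\geqslant\log^{c_2}N+O_\Delta(1)$, which holds since $m^{\kappa/2}\gtrsim(\log N)^{\kappa/2}(\log\log N)^{-\kappa/2}$ and $\kappa/2>c_2$. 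Regime (ii), $k_0<k\leqslant k_1$: again $b_k=(2^{k+1}\cdot 6\ell_0 k(\Delta-1)^k)^{-1}$, and after taking logarithms the inequality becomes $(\tfrac m8-1)\log k\geqslant k\log(2(\Delta-1))+\log^{c_2}N+O(1)$; since $k\mapsto(\tfrac m8-1)\log k-k\log(2(\Delta-1))$ is concave, it suffices to verify this at $k=k_0$ (where the left side is of order $\log N/\log\log N$ and dominates $\log^{c_2}N+O_\Delta(1)$) and at $k=k_1$ (where $\log k_1\geqslant\tfrac12\log\log N$ makes the left side $\geqslant\tfrac72\log N$, while $k_1\log(2(\Delta-1))\leqslant(1+\tfrac{\log2}{\log(\Delta-1)})\log N\leqslant 2\log N$ because $\Delta-1\geqslant2$). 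Regime (iii), $k_1<k\leqslant\ell_0$: here $b_k=\tfrac{3}{2\pi^2k^2N^2}$, and the inequality reduces to $(\tfrac m8-2)\log k\geqslant 2\log N+O(1)$, which holds since $\log k\geqslant\log k_1\geqslant\tfrac12\log\log N$ forces the left side to be $\geqslant\tfrac72\log N$. Combining the three regimes yields $\mathbb{P}(\mathcal{L}_{u,v})\leqslant\tfrac12 b_k=\tfrac12 h(\{u,v\})$.

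I expect the only genuinely delicate point to be regime (ii): the logarithmic inequality must be verified uniformly over the whole range $k_0<k\leqslant k_1\asymp\log N$, and the balance between $\tfrac m8\log k$ and the linear term $k\log(2(\Delta-1))$ is tightest near the upper endpoint $k=k_1$ --- this is precisely where the value $\alpha_1=64$ is needed, and where the hypothesis $\Delta\geqslant 3$ (equivalently $\log2\leqslant\log(\Delta-1)$) enters. The distributional reduction of the first step and regime (i) are routine.
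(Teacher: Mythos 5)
Your proposal is correct and follows essentially the same route as the paper: the same distributional reduction to $\sum_{i=1}^k\mathbf{Y}_i$ (which the paper asserts without the cancellation argument you spell out), the same invocation of Lemmas \ref{lem:prob_bound_2} and \ref{lem:prob_bound_1}, and the same three regimes $[2,k_0]$, $(k_0,k_1]$, $(k_1,\ell_0]$ with the same quantitative inputs ($\alpha_1=64$, $\log k_1\geqslant\tfrac12\log\log N$, $\tfrac{\log(2(\Delta-1))}{\log(\Delta-1)}\leqslant 2$). The only cosmetic difference is that in regime (ii) you verify the inequality via concavity and endpoint checks where the paper uses monotonicity of $x/\log x$; these are interchangeable.
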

\begin{proof}
Fix $\{u,v\}\in\mathcal{V}$, and set $k:=\mathrm{dist}_T(u,v)$. We distinguish three cases.

First, assume that $k\in \{2,\dots,k_0\}$. Since the random vectors
$\sum_{e\in P(r_T,u)}\mathbf{Y}_e - \sum_{e\in P(r_T,v)}\mathbf{Y}_e$ and
$\sum_{e\in P(u,v)}\mathbf{Y}_e $ have the same distribution, then provided that $N_0$ is sufficiently large,
by Lemma \ref{lem:prob_bound_2},
\begin{equation}\label{eq:019}
\mathbb{P}(\mathcal{L}_{u,v})\leqslant \exp\big(- m ^{\frac{\kappa}{2}}\big) \leqslant
\exp\Big(- \alpha_1^{\frac{\kappa}{2}}\frac{\log^{\frac{\kappa}{2}}N}{(\log\log N)^{\frac{\kappa}{2}}}\Big).
\end{equation}
On the other hand, by the choice of $\ell_0$ in \eqref{eq:009} and the fact that $b_k\geqslant b_{k_0}$ for $k\in \{2,\dots, k_0\}$,
\begin{equation}\label{eq:020}
\frac{h(\{u,v\})}{2} =\frac{b_k}{2} \geqslant \frac{b_{k_0}}{2} \geqslant
\exp\big(-\log 24-k_0\log(2\Delta-2)-\log k_0 - \log^{c_2}N\big).
\end{equation}
Since $c_2<\frac{\kappa}{2}$ and using the fact that $N_0$ can be taken sufficiently large, the claim
follows in this case by~\eqref{eq:019} and \eqref{eq:020}.

Next, assume that $k\in \{k_0+1,\dots,k_1\}$. Using again the fact that
$\sum_{e\in P(r_T,u)}\mathbf{Y}_e - \sum_{e\in P(r_T,v)}\mathbf{Y}_e$ and
$\sum_{e\in P(u,v)}\mathbf{Y}_e $ have the same distribution, by Lemma \ref{lem:prob_bound_1},
the choice of $k_0$ in \eqref{eq:009} and the fact that $c_1\leqslant\frac{1}{4}$, we obtain that
\begin{equation}\label{eq:021}
\mathbb{P}(\mathcal{L}_{u,v}) \leqslant \exp\Big(-\frac{1}{2} \Big(\frac{1}{2}-c_1\Big)m\log k \Big)
\leqslant \exp\Big( -\frac{\alpha_1}{8} \cdot \frac{\log N}{\log\log N} \log k \Big).
\end{equation}
Since the function $f(x) = \frac{x}{\log x}$ is increasing for $x\geqslant3$ and $k>k_0\geqslant3$, we see that
\begin{align*}
\frac{h(\{u,v\})}{2} & =
\exp\big(-\log 24-k\log(2\Delta-2)-\log k - \log^{c_2}N\big) \\
& =\exp\Big(-\Big(\frac{\log 24}{\log k}+\frac{k}{\log k }\log(2\Delta-2)+1+
\frac{\log^{c_2}N}{\log k}\Big)  \log k \Big)\\
& \geqslant \exp\Big(-\Big(\frac{\log 24}{\log 3 }+\frac{k_1}{\log k_1 }\log(2\Delta-2)+1+
\frac{\log^{c_2}N}{\log k }\Big)  \log k \Big)\\
& \geqslant \exp\Big(-\Big(4+\frac{\log N }{\log\log N-\log\log(\Delta-1)}\cdot\frac{\log (2(\Delta-1))}{\log(\Delta-1)}+
\frac{\log^{c_2}N}{\log k}\Big)  \log k \Big).
\end{align*}
Taking $N_0$ sufficiently large and observing that $\frac{\log(2(\Delta-1))}{\log(\Delta-1)}\leqslant2$
and $\frac{\alpha_1}{8}>4$, we conclude that
\begin{equation}\label{eq:022}
\frac{h(\{u,v\})}{2}
\geqslant\exp\Big(-\Big(4+4\frac{\log N}{\log\log N}+\log^{c_2}N\Big)\log k\Big)
\stackrel{\eqref{eq:021}}{\geqslant}\mathbb{P}(\mathcal{L}_{u,v}).
\end{equation}

Finally, assume that $k\in \{k_1+1,\dots,\ell_0\}$. As before, since
$\sum_{e\in P(r_T,u)}\mathbf{Y}_e - \sum_{e\in P(r_T,v)}\mathbf{Y}_e$ and
$\sum_{e\in P(u,v)}\mathbf{Y}_e $ have the same distribution, by Lemma \ref{lem:prob_bound_1},
the choice of $k_0$ in \eqref{eq:009} and the fact that $k>k_0$ and $c_1\leqslant\frac{1}{4}$, we have
\begin{equation}\label{eq:023}
\mathbb{P}(\mathcal{L}_{u,v}) \leqslant \exp\Big(-\frac{1}{2} \Big(\frac{1}{2}-c_1\Big)m\log k \Big)
\leqslant \exp\Big( -\frac{\alpha_1}{8} \cdot \frac{\log N}{\log\log N} \log k \Big).
\end{equation}
Taking $N_0$ sufficiently large, we may assume that
$\log k \geqslant \log \log N - \log\log(\Delta-1) \geqslant \frac{1}{2}\log\log N$.
Thus, by \eqref{eq:023} and the choice of $\alpha_1$ in \eqref{eq:001}, we have that
\begin{equation}\label{eq:024}
\mathbb{P}(\mathcal{L}_{u,v})\leqslant \exp( -4\log N).
\end{equation}
Since $k_1 < k \leqslant \ell_0$, taking $N_0$ sufficiently large, we see that
\begin{equation} \label{eq:025}
\frac{h(\{u,v\})}{2} = \exp\Big(-\log\Big(\frac{4\pi^2}{3}\Big) -2\log k -2\log N\Big)
\geqslant \exp\Big(- 3\log N \Big).
\end{equation}
Thus, this final case the claim follows from  \eqref{eq:024} and \eqref{eq:025}.
\end{proof}
By Claim \ref{claim_Lovatz_condition} and inequality \eqref{eq:018},  for every $\{u,v\}\in \mathcal{V}$,
\[ \mathbb{P}(\mathcal{L}_{u,v})\leqslant h(\{u,v\})
\prod_{\{u',v'\}\in N_{\mathcal{G}}(\{u,v\})} \big(1-h(\{u',v'\})\big). \]
Therefore, inequality \eqref{eq:010} follows from Lemma \ref{lem:LLL}.

\subsection{Completion of the proof of part $(B)$ of Theorem \ref{thm:complete-tree-embedding}}

Let $x_1,\dots,x_m\in\mathbb{R}^m$ be the vectors selected in Subsection \ref{subsection_Constr_Rand_Emb}.
It is convenient to introduce an auxiliary norm $\|\cdot\|_2^*$ in $\mathbb{R}^m$ as follows.
Given $x\in \mathbb{R}^m$ written as $x = \sum_{i=1}^{m} a_i x_i$, we set
$\|x\|_2^* = (a_1^2+\dots+a_m^2)^{1/2}$. By \eqref{eq2.5}, we see that for every $x\in\mathbb{R}^m$,
\begin{equation}\label{eq:026}
  \|x\|_2^* \leqslant \|x\|_X \leqslant \sqrt{m}\|x\|_2^*.
\end{equation}
Moreover, recalling \eqref{eq:G-def} and Fact \ref{cor:gaussian_tails}, for every subset $E'$ of the edge-set $E$ of $T$ and
every $t>\sqrt{m}$,
\begin{equation}\label{eq:027}
    \mathbb{P}\Bigg( \Big\|\frac{1}{\sqrt{|E'|}}\sum_{e\in E'}\mathbf{G}_e\Big\|_2^*>t \Bigg)
    \leqslant 2\exp \Big( -\frac{(t-\sqrt{m})^2}{2} \Big).
\end{equation}

Now, set
\[ \mathcal{L} :=
\bigcap_{\substack{ u,v\in V \\ 2\leqslant\mathrm{dist}_T(u,v)\leqslant\ell_0 }} \mathcal{L}_{u,v}^{\complement}. \]
Note that in order to show \eqref{eq:008}, it suffices to prove that for every $u,v\in V$ with $u\neq v$,
\begin{equation}\label{eq:028}
\mathbb{P}\big(\mathcal{A}_{u,v}^{\complement}\,\big|\,\mathcal{L}\big)\leqslant\frac{1}{N^2}.
\end{equation}
To that end, let $u,v\in V$ with $u\neq v$ and set $k:= \mathrm{dist}_T(u,v)$.
Notice that
\begin{equation}\label{eq:029}
  \boldsymbol{\zeta}(u)- \boldsymbol{\zeta}(v)
  = \Big( 1-\frac{1}{\log N} \Big) \Big(\sum_{e\in P(r_T, u)} \mathbf{Y}_e - \sum_{e\in P(r_T, v)} \mathbf{Y}_e\Big) +\delta \Big(\sum_{e\in P(r_T, u)} \mathbf{G}_e - \sum_{e\in P(r_T, v)} \mathbf{G}_e\Big);
\end{equation}
moreover, due to the independence of $(\mathbf{Y}_e)_{e\in E}$ and $(\mathbf{G}_e)_{e\in E}$, for every measurable
$A\subseteq \mathbb{R}^m$,
\begin{equation}\label{eq:030}
\mathbb{P}\Big(\sum_{e\in P(r_T, u)} \mathbf{G}_e - \sum_{e\in P(r_T, v)} \mathbf{G}_e \in A\; \Big|\, \mathcal{L}\Big)
  = \mathbb{P} \Big(\sum_{e\in P(u, v)} \mathbf{G}_e \in A\Big).
\end{equation}
We consider four cases according to the value of $k$.

\subsubsection*{Case 1: $k=1$}

In this case, we have $e:=\{u,v\}\in E$. Since $\mathbf{Y}_e$ is uniformly chosen from the unit ball of $X$,
by the definition of $\mathcal{A}_{u,v}$ and \eqref{eq:029} and \eqref{eq:030}, we have
\begin{equation} \label{eq:031}
\mathbb{P}\big(\mathcal{A}_{u,v}^{\complement}\, \big|\, \mathcal{L}\big) \leqslant
\mathbb{P}\big( \|\mathbf{G}_e\|_X> \delta^{-1} \log^{-1}N\big) \stackrel{\eqref{eq:026}}{\leqslant}
\mathbb{P}\big(\|\mathbf{G}_e\|_2^*> m^{-\frac{1}{2}}\delta^{-1} \log^{-1}N\big).
\end{equation}
Since $N_0$ can be taken sufficiently large, we see that
\begin{align*}
& \frac{\big( m^{-\frac{1}{2}}\delta^{-1}\log^{-1}N -\sqrt{m}\big)^2}{2} \\
& \ \ \ \ \ \ =
\frac{1}{2} \Big(\exp\Big( \Big(\frac{1}{2}-\frac{c_1}{2}\Big) \log^{c_2}N -\frac{1}{2} \log m - \log\log N\Big) -
\exp\Big(-\frac{1}{2}\log m\Big)\Big)^2 \\
& \ \ \ \ \ \
\geqslant \exp\big( (1-2c_1)\log^{c_2}N\big) > 2\log N+\log2.
\end{align*}
By \eqref{eq:027} and \eqref{eq:031}, we conclude that
\[ \mathbb{P}\big(\mathcal{A}_{u,v}^{\complement}\,\big|\,\mathcal{L}\big)
\leqslant 2\exp\big(-2\log N-\log2\big) = \frac{1}{N^2}. \]

\subsubsection*{Case 2: $k\in\{2,\dots,k_0\}$}

By the definitions of $\mathcal{L}$ and $\mathcal{A}_{u,v}$ and \eqref{eq:029}, \eqref{eq:030} and \eqref{eq:026},
\begin{align} \label{eq:032}
\mathbb{P}\big(\mathcal{A}_{u,v}^{\complement}\,\big|\,\mathcal{L}\big) &
\leqslant \mathbb{P}\Bigg( \delta\Big\|\sum_{e\in P(u, v)} \mathbf{G}_e\Big\|_X\geqslant
\Big(1-\frac{1}{\log N}\Big) \Big(1+\frac{1}{2\, m^{1-\kappa}}\Big)-1\Bigg) \\
& \leqslant \mathbb{P}\Bigg(\Big\|\frac{1}{\sqrt{k}}\sum_{e\in P(u, v)} \mathbf{G}_e\Big\|_2^*\geqslant
\frac{1}{\sqrt{k_0}\sqrt{m}\delta} \Big( \Big(1-\frac{1}{\log N}\Big) \Big(1+\frac{1}{2\, m^{1-\kappa}}\Big)-1\Big)\Bigg). \nonumber
\end{align}
Taking $N_0$ sufficiently large and invoking the definitions of $m$, $\ell_0,k_0$ and $\delta$, we see that
\begin{align*}
\frac12 & \Big(  \frac{1}{\sqrt{k_0}\sqrt{m}\delta} \Big( \Big(1-\frac{1}{\log N}\Big)
\Big(1+\frac{1}{2\, m^{1-\kappa}}\Big)-1\Big) - \sqrt{m}\Big)^2 \geqslant \frac12 \,
\Big(\frac{1}{\sqrt{k_0}\sqrt{m}\delta}\cdot\frac{1}{4\, m^{1-\kappa}}-\sqrt{m}\Big)^2 \\
& \geqslant \frac12 \Big( \exp\Big( \Big(\frac12-\frac{c_1}{2}\Big) \log^{c_2}N- \Big(\frac32-\kappa\Big) \log m +
\log\Big(\frac{1}{4\sqrt{k_0}}\Big)\Big)- \exp\Big(-\frac12\log m\Big) \Big)^2 \\
&\geqslant \exp\big( (1-2c_1) \log^{c_2}N\big) >2\log N+\log2.
\end{align*}
Hence, by \eqref{eq:027} and \eqref{eq:032}, we obtain that
\[ \mathbb{P}\big( \mathcal{A}_{u,v}^{\complement}\,\big|\,\mathcal{L}\big)
\leqslant 2\exp(-2\log N-\log2) = \frac{1}{N^2}. \]

\subsubsection*{Case 3: $k\in\{k_0+1,\dots, \ell_0\}$}

Using \eqref{eq:026}, the fact that $1-\frac{1}{\log N}\geqslant\frac{1}{2}$ and that in this case we have that
$k>k_0\geqslant 4^{\frac{1}{c_1}}$, we see that
\begin{align} \label{eq:033}
\mathbb{P}\big(\mathcal{A}_{u,v}^{\complement}\,\big|\,\mathcal{L}\big) & \leqslant
\mathbb{P}\Bigg(\delta\Big\|\sum_{e\in P(u, v)} \mathbf{G}_e\Big\|_X\geqslant
\left(1-\frac{1}{\log N}\right)k^{c_1}-1\Bigg) \\
& \leqslant \mathbb{P}\Bigg( \Big\|\frac{1}{\sqrt{k}}\sum_{e\in P(u, v)} \mathbf{G}_e\Big\|_X \geqslant
\frac{k^{c_1-\frac{1}{2}}}{4\delta}\Bigg) \leqslant
\mathbb{P}\Bigg( \Big\|\frac{1}{\sqrt{k}}\sum_{e\in P(u, v)} \mathbf{G}_e\Big\|_2^*\geqslant
\frac{\ell_0^{c_1-\frac{1}{2}}}{4\delta\sqrt{m}}\Bigg). \nonumber
\end{align}
On the other hand, by the definitions of $\ell_0$ and $\delta$,
\[  \frac{1}{\delta}\, \ell_0^{c_1-\frac12} =
\exp\Big(\Big(\frac12-\frac{c_1}{2}\Big)\log^{c_2}N +
\Big(c_1-\frac12\Big)\log^{c_2}N\Big)=\exp\Big(\frac{c_1}{2}\log^{c_2}N\Big), \]
and so, taking $N_0$ sufficiently large and using our starting assumption on the dimension $m$,
\begin{align*}
\frac12\, \Big( \frac{\ell_0^{c_1-\frac{1}{2}}}{4\delta\sqrt{m}} - \sqrt{m}\Big)^2
& = \frac12\, \Big( \exp\Big(\frac{c_1}{2}\log^{c_2}N- \frac12\log m +
\log\frac14\Big)- \exp\Big( -\frac12\log m\Big) \Big)^2 \\
&\geqslant \exp\Big( \frac{c_1}{2}\log^{c_2}N\Big) >2\log N+\log2.
\end{align*}
By \eqref{eq:027} and \eqref{eq:033}, we obtain that
\[ \mathbb{P}\big( \mathcal{A}_{u,v}^{\complement}\, \big|\, \mathcal{L}\big)
\leqslant 2\exp\big(-2\log N-\log2\big) = \frac{1}{N^2}. \]

\subsubsection*{Case 4: $k>\ell_0$}

By \eqref{eq:029}, \eqref{eq:030} and \eqref{eq:026} and the fact that $\mathbf{G}_e$ is symmetric for every $e\in E$, we have
\begin{align} \label{eq:034}
\mathbb{P}\big( \mathcal{A}_{u,v}^{\complement}\, \big|\, \mathcal{L}\big) & =
\mathbb{P}\big( \| \boldsymbol{\zeta}(u) - \boldsymbol{\zeta}(v)\|_X\leqslant 1\, \big|\,\mathcal{L}\big) \leqslant
\mathbb{P}\big( \| \boldsymbol{\zeta}(u) - \boldsymbol{\zeta}(v)\|_2^*\leqslant 1\, \big|\, \mathcal{L}\big) \\
& = \mathbb{P} \Big( \delta\sum_{e\in P(u,v)} \mathbf{G}_e \in -\Big(1-\frac{1}{\log N}\Big) \Big(\sum_{e\in P(r_T,u)} \mathbf{Y}_e - \sum_{e\in P(r_T,v)} \mathbf{Y}_e\Big)
+ B_{\|\cdot\|_2^*} \, \Big|\, \mathcal{L}\Big) \nonumber \\
& \leqslant \mathbb{P} \Big( \delta\sum_{e\in P(u,v)} \mathbf{G}_e \in  B_{\|\cdot\|_2^*}\, \Big|\,  \mathcal{L}\Big)
\leqslant \mathbb{P}\Bigg(\Big\|\frac{1}{\sqrt{k}}\sum_{e\in P(u,v)} \mathbf{G}_e\Big\|_2^*
\leqslant\frac{1}{\delta\sqrt{\ell_0}}\Bigg). \nonumber
\end{align}
Next observe that, by the choices of $\ell_0$ and $\delta$,
\begin{equation} \label{eq:035}
\frac{1}{\delta\sqrt{\ell_0}} = \exp\Big(-\frac{c_1}{2} \log^{c_2}N\Big).
\end{equation}
Moreover, the random variable $\big\|\frac{1}{\sqrt{k}}\sum_{e\in P(u,v)} \mathbf{G}_e\big\|_2^*$
has the same distribution with $\|\mathbf{G}\|_2$, where $\mathbf{G}$ denotes a random vector in $\mathbb{R}^m$
with i.i.d. standard normal entries. Thus, by \eqref{eq:034},
\begin{equation}\label{eq:036}
\mathbb{P}\big(\mathcal{A}_{u,v}^{\complement}\, \big|\, \mathcal{L}\big) \leqslant
\mathbb{P}\left( \|\mathbf{G}\|_2 \leqslant\frac{1}{\delta\sqrt{\ell_0}}\right).
\end{equation}
Since the density function
of $\mathbf{G}$ is pointwise bounded by $1$, this implies
\begin{equation} \label{eq:037}
\mathbb{P}\big( \mathcal{A}_{u,v}^{\complement}\,\big|\,\mathcal{L}\big)
\leqslant \Big(\frac{C_B}{\delta\sqrt{\ell_0}}\Big)^m,
\end{equation}
where $C_B$ is as in Lemma \ref{lem:ball_volume}. (Here, more straightforward and sharper estimates are, of course, available; we are using Lemma \ref{lem:ball_volume} for uniformity in the argument.) Finally, taking $N_0$ sufficiently large, we conclude that
\begin{align*}
\mathbb{P}\big( \mathcal{A}_{u,v}^{\complement}\,\big|\,\mathcal{L}\big)
& \leqslant \exp\Big( m\Big(-\frac{c_1}{2}\log^{c_2}N + \log C_B\Big)\Big)
\leqslant \exp\Big( -m\frac{c_1}{4}\log^{c_2}N \Big) \\
& \leqslant \exp\Big(-\frac{\alpha_1 c_1}{4}\cdot \frac{\log^{1+c_2}N}{\log\log N} \Big)
\leqslant \exp\big(-2\log N\big) = \frac{1}{N^2}.
\end{align*}

The above cases are exhaustive, and so the entire proof of Theorem \ref{thm:complete-tree-embedding} is completed.

\begin{remark}
By slightly modifying the argument, it is not hard to see that, with positive probability,
the random embedding $\boldsymbol{\zeta}$ constructed in the proof of part $(A)$ of
Theorem \ref{thm:complete-tree-embedding} satisfies
\begin{enumerate}
\item[$\bullet$] $\|\boldsymbol{\zeta}(u)- \boldsymbol{\zeta}(v)\|_X \leqslant 2 \dist_T(u,v)$
for all $u,v\in T$, and
\item[$\bullet$] $\|\boldsymbol{\zeta}(u)- \boldsymbol{\zeta}(v)\|_X \gtrsim \sqrt[10]{\dist_T(u,v)}$
for all $u,v\in T$ with $\dist_T(u,v)>k_0$,
\end{enumerate}
where $k_0$ is as \eqref{eq:009}.
\end{remark}

\subsection*{Acknowledgments}

We would like to thank the anonymous referees for their comments, remarks and suggestions that helped us improve the exposition.

The research was supported in the framework of H.F.R.I call ``Basic research Financing (Horizontal support of all Sciences)" under the National Recovery and Resilience Plan ``Greece 2.0" funded by the European Union--NextGenerationEU (H.F.R.I. Project Number: 15866). The third named author (K.T.) is partially supported by the NSF grant DMS 2331037.

\end{document}